\tikzset{zigzag/.style={decorate, decoration=zigzag}}
\newtheorem{de}{Definition}[section]
\newtheorem{thm}[de]{Theorem}
\newtheorem{prop}[de]{Proposition}
\theoremstyle{remark}
\newtheorem{re}[de]{Remark}
\newtheorem{ob}[de]{Observation}
\newcommand{\Z}{\mathbb{Z}}
\title{Fibered ribbon pretzels}
\author{Ana G.\ Lecuona}
\address{%
School of Mathematics and Statistics, University of Glasgow, University Place, UK and
\newline
Aix Marseille Universit\'{e}, CNRS, Centrale Marseille, I2M, UMR 7373, 13453 Marseille, France}
\email{ana.lecuona@glasgow.ac.uk}
\author{Andy Wand}
\address{%
School of Mathematics and Statistics, University of Glasgow, University Place, UK }
\email{andy.wand@glasgow.ac.uk}
\begin{document}

\begin{abstract}
    We classify fibered ribbon pretzel knots up to mutation. The classification is complete, except perhaps for members of Lecuona's ``exceptional'' family of \cite{pretzel}.  The result is obtained by combining lattice embedding techniques with Gabai's classification of fibered pretzel knots, and exhibiting ribbon disks, some of which lie outside of known patterns for standard pretzel projections.  
\end{abstract}

\maketitle

\section{Introduction}

Given a knot, or family of knots, in the 3-sphere one can ask many interesting mathematical questions; this paper focuses on \emph{fiberedness} and \emph{sliceness} among pretzel knots. A knot is said to be fibered when its complement admits a nice fibration by surfaces over $S^1$, and slice if it bounds a smoothly embedded disc in the 4-ball whose boundary is the 3-sphere in which the knot lives.

Fibered knots have many well studied features: for example we have known since the 60s that the Alexander polynomial of a fibered knot is monic and has degree twice the genus of the fibered surface. While there are invariants, such as Heegaard Floer homology, which detect fiberedness, and `recipes' to construct all fibered knots, it remains in general difficult to determine fiberedness and the corresponding fibered surfaces for families of knots. Fibered knots are moreover at the intersection of various themes in low dimensional topology, such as contact topology, where they are used to define open book decompositions of manifolds which in turn carry the information of contact structures, or algebraic geometry, where we find fibered knots as links of plane curve singularities.

Slice knots have been at the forefront of knot theory since work of Fox and Milnor in the late 50s. There are many computable knot invariants which behave in a particular way for slice knots; to name a few: the signature vanishes, the determinant is a square, the Alexander polynomial has a particular factorization, Ozsv\'ath and Szab\'o's $\tau$ invariant vanishes, Rasmussen's $s$ invariant vanishes etc. While obstructions abound, by far the most common way to show that a knot is (smoothly) slice is to exhibit a slice disk. All the slice disks constructed in the literature have the property of being \emph{ribbon}; that is, they can all be projected into $S^3$ with exclusively \emph{ribbon singularities}. From the 4-dimensional point of view, the defining property of these disks is that they do not have local maxima for the radial function in $B^4$. The famous slice-ribbon conjecture proposes that every slice knot is in fact ribbon. 

The intersection of fiberedness and ribbon/sliceness is of particular interest, in large part due to work of Casson and Gordon \cite{casson1983loop}, who observe that existence of a ribbon disc implies a surjection at the level of $\pi_1$ from the knot complement to the disk complement, which of course by definition implies sliceness. Labelling this algebraic condition \emph{homotopically ribbon}, we have thus the following inclusions:
$$
\{\mathrm{ribbon}\}\subseteq\{\mathrm{homotopically\  ribbon}\}\subseteq\{\mathrm{slice}\}.
$$
Moreover, in the case of fibered knots, Casson and Gordon show that homotopical ribbonness admits a simple characterisation in terms of the fibration, namely that its monodromy extends over a handlebody. Investigation of the details of this result led Miller \cite[Remark~6.4]{Maggie} to study fibered ribbon 2-bridge knots, concluding that there are exactly 5 of these.

Beyond this first classification result, we are not aware of other results in the literature where there has been a similar attempt with other families of knots. This brings us to the third word in the title of this article: pretzel knots, which  are referenced back to at least Reidemeister in the 30s, and have long been a favorite family in which to test conjectures and find interesting examples of knotted phenomena. They provided the first examples of invertible and non reversible knots among other properties. They were classified by Kawauchi in the 80s and, relevant to this present work, Gabai in the 80s established which of them are fibered. The question of which pretzel knots are slice, in full generality, is still open, but, as further detailed in Section~\ref{s:background} quite a bit is known, providing a conjectural general picture. 

To state our results, some conventions, standard when discussing pretzel knots, need to be established. Firstly, as we describe in detail in the next section, such knots can be encoded by an ordered tuple of integers. Mutations of the knot correspond to re-orderings of these parameters; as mutation preserves most known obstructions to sliceness, we will only consider knots up to mutation. Secondly, a small family of pretzel knots, which we label \emph{exceptional}, while they can for the most part be dealt with individually, has so far resisted all systematic efforts at ribbon classification; accordingly we restrict ourselves to the non-exceptional knots. Finally, if a pretzel knot is not prime, it on the one hand is a connected sum of two-bridge knots which obviously fiber \cite[Remarks~6.2]{Gabai}, and thus itself fibers, while ribbonness of these was determined by Lisca \cite{lisca2007sums}.

With this in mind, we come to our main theorem: 
\begin{thm} \label{thm_main}
    Let $K$ be a prime non-exceptional pretzel knot. Then, $K$ is fibered and ribbon up to reordering of parameters if and only if one of the following holds: 
    \begin{enumerate}
        \item $K=\pm P(1,1,1,1,-3,-3,-3)$ (note that this is $\pm 10_{75}$ in the Rolfsen table), or
        \item $K=\pm P(q_1,\ldots,q_r,-q_1,\ldots,-q_r,k)$, with $q_i \geq 3$, $k$ even, or
        \item $K = \pm P(1,3,t+1,-4-t,q_1,\ldots,q_r,-q_1\ldots,-q_r$), with $q_i \geq3$, $r,t \geq0$, or 
        
        \item $K = \pm P(k,-k-1, q_1,\ldots,q_r,-q_1\ldots,-q_r$), with $q_i \geq3$, $r\geq 0$, and $1 < k < q_i$ for all $i$.
        
    \end{enumerate}
\end{thm}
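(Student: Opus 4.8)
The plan is to split the proof into two logically independent directions, as is standard for an "if and only if" classification. For the "if" direction, I need to show each of the four families (and their mirrors) is both fibered and ribbon. Fiberedness should follow directly from Gabai's classification of fibered pretzel knots, which is available from Section~\ref{s:background}; each listed tuple must be checked against Gabai's criterion (essentially that the parameters with the same sign pair up appropriately, or that the continued-fraction/plumbing condition holds). The ribbon direction of the "if" part is the constructive heart: for families (2), (3), and (4), I would exhibit explicit ribbon disks. Family (2) is the classical "annulus-pattern" ribbon disk, where the $q_i$ strand is band-summed with the $-q_i$ strand, leaving a single $k$-twist region which, with $k$ even, bounds trivially; this is a known pattern for standard pretzel projections. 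Families (3) and (4) are precisely the cases flagged in the abstract as "lying outside of known patterns," so here I would need to draw the ribbon moves carefully — presumably a ribbon disk for the "core" knot $P(1,3,t+1,-4-t)$ or $P(k,-k-1)$ respectively (the latter is a twist knot, classically ribbon when the parameters differ by one), then band-sum on the symmetric $q_i,-q_i$ pairs. Case (1), the knot $10_{75}$, is a single sporadic example whose ribbon disk can be exhibited directly.

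For the "only if" direction, the strategy is to take an arbitrary prime non-exceptional pretzel knot $K$ that is both fibered and ribbon, and show it must be (up to reordering and mirroring) one of the four families. The starting point is Gabai's theorem: being fibered already restricts $K = P(p_1,\ldots,p_n)$ to a short list of shapes. Then ribbonness is attacked via the obstruction machinery — most powerfully, if $K$ is slice then the double branched cover $\Sigma_2(K)$ bounds a rational homology ball, and since $\Sigma_2(K)$ of a pretzel knot bounds a canonical plumbed (negative-definite, after orientation) 4-manifold, Donaldson's theorem / Lisca-style lattice embedding arguments force the associated intersection lattice to embed into the standard negative-definite lattice $\mathbb{Z}^N$. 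This is exactly the "lattice embedding techniques" the abstract promises. Running this embedding analysis on Gabai's fibered list should eliminate all tuples except those matching (1)–(4), after which one checks that the surviving candidates are exactly covered by the families (no further mutation-inequivalent cases slip through).

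**The hard part will be** two things. First, the combinatorial lattice-embedding casework in the "only if" direction: Gabai's fibered pretzels, even after normalization, form an infinite family parametrized by several integers, and showing that the linear lattice (or more precisely the lattice presented by the Goeritz/branched-cover matrix) embeds in $\mathbb{Z}^N$ only in the listed cases requires a careful, possibly lengthy, case analysis — this is where subtle sporadic solutions like $10_{75}$ and the boundary constraints $1 < k < q_i$ in family (4) will emerge, and where the "exceptional" family must be quarantined because the lattice argument is inconclusive there. Second, the explicit ribbon disks for families (3) and (4): since these are not standard patterns, verifying that the proposed sequence of band moves actually produces an unknot (equivalently, that the disk is embedded) takes genuine care, and one must confirm the arithmetic side-conditions ($k$ even in (2); $t \geq 0$ and $r \geq 0$ in (3); $1 < k < q_i$ in (4)) are exactly what make the construction go through and simultaneously match the constraints coming out of the lattice obstruction. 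Reconciling these two halves — ensuring the constructed-ribbon list and the lattice-surviving list coincide with no gap — is the crux that makes the classification "complete except for the exceptional family."
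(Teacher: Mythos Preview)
Your proposal is correct and matches the paper's approach: Gabai's classification narrows the fibered candidates, Donaldson/lattice-embedding obstructions on the double branched cover eliminate most of them, and explicit ribbon disks are exhibited for the survivors. The paper's organization differs only cosmetically, running the casework by Gabai Type (1, 2A, 2B, 3A, 3B, 3C) crossed with the unitary/non-unitary dichotomy rather than by the if/only-if split, and for the non-unitary cases it imports the lattice conclusions directly from \cite{pretzel} rather than redoing that analysis.
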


We remark that the knot $\pm 10_{75}$ is actually mutation-invariant, so the ordering of parameters in the case (1) is arbitrary. Also, knots with parameters in cases (1), (3) and (4) are never exceptional. It is also worth noting that in the literature pretzel knots are often classified as \emph{odd}, i.e.\ defined by an odd number of odd parameters, or \emph{even} otherwise. In this language, Theorem \ref{thm_main} implies that $\pm 10_{75}$ is in fact the unique fibered ribbon odd pretzel knot, while there are infinitely many even ones. 

We want to stress that reordering of the parameters plays very differently with fiberedness and ribbonness, and we won't attempt to pin down the different possibilites. To showcase the intricacy of the problem let us highlight that
for example the pretzel knot $K=P(3,-3,5,-5,7,-7,4)$ is ribbon and fibered; its mutant $K'=P(3,5,7,-7,-5,-3,4)$ is ribbon, but not fibered; furthermore $K''=P(3,-5,7,-3,5,-7,4)$ is fibered but, as Nathan Dunfield pointed out in private communication, can be shown using SnapPy's implementation of Herald-Kirk-Livingston slice obstructions to not be ribbon \cite{SnapPy}, and finally the mutant $K'''=P(3,5,7,-3,-5,-7,4)$ is neither fibered nor ribbon. 

The strategy we follow to prove Theorem~\ref{thm_main} is as follows: taking Gabai's classification of fibered pretzel knots as a starting point, we determine which of them are potentially slice by obstructing their double covers from bounding rational homology balls using lattice embedding techniques derived from Donaldson's Theorem A. The potentially slice candidates are then shown to be ribbon by explicitly describing the disks. It is worth noting that it is customary in the literature to exclude or to treat independently the case in which parameters can include $\pm1$ or $0$. In fact, many previous results, particularly regarding sliceness and ribbonness properties, are either stated only for pretzel knots with no unitary parameters, or are treated separately \cite{pretzel,miller,bryant,long}. This article will also treat the unitary and non-unitary cases separately. If 0 is a parameter, it can only appear once, and the knot is a connected sum, so as mentioned above falls into known classifications.

The structure of the paper is as follows: in Section~\ref{s:background} we will establish definitions and conventions more precisely, and recall known results. In Section~\ref{s:lattice}, we will review the tools needed for our proofs, which will be left for Section~\ref{s:lemmas}. The proof of Theorem~\ref{thm_main} is obtained as a combination of results in this last section.

\noindent\textbf{Acknowledgements.} We would like to thank Nathan Dunfield for helpful correspondence, and the anonymous referee for a remarkably careful reading of our article and their comments and suggestions.

\section{Pretzel knots: fiberedness and sliceness}\label{s:background}

We start this section by collecting some well known facts about pretzel knots. A pretzel link is one which admits a projection as a series of $n$ pairs of strands with $|p_i|$ crossings in each pair, right handed if $p_i>0$ and left handed if $p_i<0$, joined by a series of arcs as in Figure~\ref{f:pretzel}.
\begin{figure}
\centering
\includegraphics[width=0.85\textwidth]{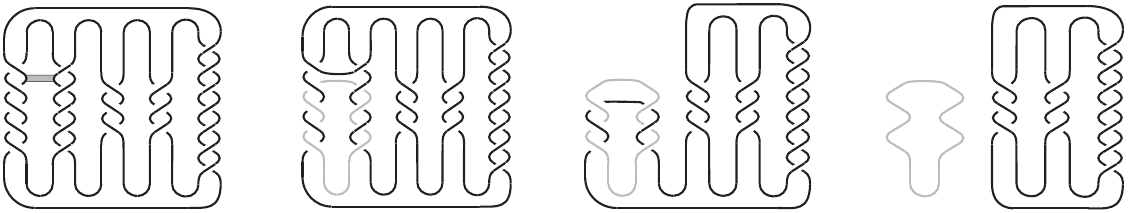}
\caption{The leftmost figure, without the gray band, is the standard projection of the pretzel knot $P(-5,5,-3,3,7)$. The gray band represents the ribbon move that can always be performed when we have adjacent parameters of opposite sign and same absolute value. The other figures depict the result of performing the ribbon move.}
\label{f:pretzel}
\end{figure}
It is thus determined by an ordered sequence of integer numbers ${p_1,...,p_n}$, which we refer to as its defining parameters. The result will be connected, and thus a knot, exactly when either $n$ and each $p_i$ is odd, or $n$ is arbitrary and exactly one $p_i$ is even. Following Gabai, we refer to the first case as Type 1, the second as Type 2 or Type 3 respectively depending on whether $n$ is odd or even.

Parameters of absolute value one, referred to as \emph{unitary}, can be permuted to any spot in a pretzel knot by flype moves. It follows that if $p_i=1$ and $p_j=-1$ in $K$, then $p_i$ and $p_j$ can be pairwise removed and $K=P(p_1,\dots,\widehat{p_i},\dots,\widehat{p_j},\dots p_n)$. For economy of notation, we will write $P([1^{\pm d}],p_{d+1},\dots,p_n)$ to indicate the pretzel knot with the first $d$ parameters equal to $\pm1$. We can, and from now on will, further assume that in $K$ we do not have indices $i$ and $j$ such that $p_i=\pm1$ and $p_j=\mp 2$: by flyping we can simplify such a pretzel knot to $K=P(p_1,\dots,\widehat{p_i},\dots,p'_j=\pm 2,\dots, p_n)$. The non-unitary parameters (i.e.\ with $|p_i|>1$) can be cyclically permuted and their order can be reversed without changing the knot type. Further permutations of the parameters might yield non isotopic pretzel knots. For example $P(3,5,7,2)$ is isotopic to $P(5,7,2,3)$ but not to $P(3,7,5,2)$. All changes of order in the parameters yield \emph{mutant} pretzel knots, which share the double covers of $S^3$ branched over them \cite{bedient}. The mirror of the pretzel knot $K=P(p_1,\dots,p_n)$ is $\bar K=-P(p_1,\dots,p_n)=P(-p_1,\dots,-p_n)$.

A complete list of fibred pretzel links, with a description of their fibered surfaces, was established by Gabai in \cite{Gabai}. In this article we only deal with knots, which simplifies Gabai's result substantially. We summarize here the results relevant to us. For convenience, unless otherwise specified, we will order the parameters of knots of Types 2 and 3 such that the only even parameter is the last one. For these knots, Gabai's classification uses an auxiliary pretzel link constructed as follows. For Type 2,
record the sign of the \emph{odd} non-unitary parameters as $a_i=\frac{p_i}{|p_i|}$ and to $K=P([1^{\pm d}],p_{d+1},\dots,p_{n-1},2m)$ associate the pretzel link $L'=P(-2a_{d+1},-2a_{d+2},\dots,-2a_{n-1},2m)$. For Type 3 the construction of $L'$ is very similar: this time we record the sign of \emph{all} non-unitary parameters and $L'$ becomes the link with only $\pm 2$ entries defined as $L'=P(-2a_{d+1},-2a_{d+2},\dots,-2a_{n-1},\frac{2m}{|m|})$.
Finally, we recall Gabai's theorem, adapted to our setting and notation:
\begin{thm}[Gabai, Theorem~6.7 in \cite{Gabai}]\label{t:Gabai}  
A pretzel knot $K$ is one of three Types.
    \begin{enumerate}[label=\emph{Type \arabic*:},wide, itemsep =0pt, topsep=0pt]
        \item $K$ fibers if and only if each $p_i$ is $\pm 1$ or $\mp 3$ and some $p_i=\pm1$. These knots are of the form $\pm P([1^d],-3,\dots,-3)$ with $d\geq 1$.

        \item  The pretzel knot $K$ falls into one of the following subcases:
             \begin{enumerate}[wide=\dimexpr\parindent+1em+\labelsep\relax, leftmargin=* ]
                \item[\emph{Type 2A:}] The numbers of positive and negative \emph{odd} parameters differ by two. Then $K$ fibers if and only if the absolute value of the unique even parameter is two.
                 \item[\emph{Type 2B:}] The numbers of positive and negative \emph{odd} parameters are equal and $L'\neq\pm P(2,-2,\dots,2,-2)$. Then $K$ fibers if and only if $L'=\pm P(2,-2,\dots,2,-2,n)$, $n\in\Z$ or $L'=\pm P(2,-2,\dots,2,-2,2,-4)$. 
                 \item[\emph{Type 2C:}] The numbers of positive and negative \emph{odd} parameters are equal and $L'=\pm P(2,-2,\dots,2,-2)$. Then $K$ can be isotoped to be of Type~3.
            \end{enumerate}
        \item The pretzel knot $K$ falls into one of the following subcases:
        \begin{enumerate}[wide=\dimexpr\parindent+1em+\labelsep\relax, leftmargin=* ]
           \item[\emph{Type 3A:}] The numbers of positive and negative parameters differ. Then $K$ fibers if and only if the difference is two. 
           \item[\emph{Type 3B:}]  The numbers of positive and negative parameters are equal and $L'\neq\pm P(2,-2,\dots,2,-2)$. Then $K$ fibers if and only if the auxiliary link is $L'=\pm P(2,-2,\dots,2,-2,-2)$.
            \item[\emph{Type 3C:}] Otherwise, $K$ is fibered if and only if there is a unique parameter of minimal absolute value.
        \end{enumerate}
     \end{enumerate}
\end{thm}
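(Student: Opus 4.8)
The plan is to obtain this statement as the knot case of Gabai's general classification of fibered pretzel \emph{links} \cite[Theorem~6.7]{Gabai}; accordingly the task is mostly one of translation and bookkeeping, together with a recollection of the geometric input. First I would recall the combinatorial criterion for a pretzel link $P(p_1,\dots,p_n)$ to be connected --- namely $n$ odd with all $p_i$ odd (Type~1), or exactly one $p_i$ even (Types~2 and~3, according to the parity of $n$) --- so that Gabai's trichotomy restricts cleanly to knots, and then read off, in each of the three Types, which of Gabai's cases survive. The auxiliary link $L'$, built from the signs of the (odd) non-unitary parameters together with the even parameter, is precisely the object carrying Gabai's fibering condition, so the mathematical content of the theorem is the dictionary between the shape of $L'$ and the allowed parameter tuples.

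The ``if'' direction is geometric: one exhibits a fiber surface. The essential tool is Gabai's theorem that a Murasugi sum (plumbing) of Seifert surfaces is a fiber surface if and only if each summand is, together with the fact that Hopf bands, and more generally the fiber surfaces of the $(2,n)$-torus links appearing in the $L'$ picture, are fibered. For the Type~1 knots $\pm P([1^d],-3,\dots,-3)$ and for the fibered families of Types~2 and~3, one checks that the natural spanning (state/checkerboard) surface decomposes as an iterated Murasugi sum of such elementary fiber pieces, the form of $L'$ being exactly what governs whether this decomposition exists. Carrying the unitary parameters along is harmless because, as recalled in Section~\ref{s:background}, a pair $p_i=1,\ p_j=-1$ may be deleted and a pair $p_i=\pm1,\ p_j=\mp2$ absorbed by flypes, and flypes are isotopies, hence preserve fiberedness and the fiber surface.

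The ``only if'' direction is the obstruction. The cheap obstruction --- a fibered knot has monic Alexander polynomial of degree $2g$ --- disposes of many tuples, but the borderline cases require Gabai's sutured-manifold machinery: one first shows that the pretzel surface at hand is taut (of minimal genus), and then that the complementary sutured manifold fails to be a product, which is detected by a disk decomposition of the surface getting ``stuck''. This is where the non-allowed forms of $L'$ --- for instance $L'=\pm P(2,-2,\dots,2,-2)$ in Type~2C, or $L'$ not of the form $\pm P(2,-2,\dots,2,-2,n)$ or $\pm P(2,-2,\dots,2,-2,2,-4)$ in Type~2B --- appear as the precise combinatorial obstruction.

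I expect the main obstacle to be twofold. On the geometric side, proving tautness/minimality of the relevant pretzel surfaces and running the disk-decomposition analysis through every sub-case --- in particular the delicate separation of Type~2B from Type~2C, the reduction of Type~2C to Type~3, and the isolated summand $\pm P(2,-2,\dots,2,-2,2,-4)$ --- is the substantive work, but this is exactly what \cite{Gabai} already carries out. So in practice the obstacle for \emph{this} paper is the faithfulness of the link-to-knot translation: one must verify that Gabai's hypotheses on the even parameter and on $L'$ specialize without loss, that the normalization ``the even parameter is last'' costs nothing (cyclic permutation and reversal of the non-unitary parameters being isotopies), and that no fibered knot is lost when unitary parameters are present. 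Once that dictionary is pinned down, the theorem follows.
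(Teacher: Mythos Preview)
The paper does not prove this statement: it is quoted as Gabai's theorem (Theorem~6.7 of \cite{Gabai}), merely ``adapted to our setting and notation'', with no proof or proof sketch given. Your proposal is therefore not comparable to anything in the paper, though as a description of how one would extract the knot case from Gabai's link classification it is accurate in outline and appropriately identifies the Murasugi-sum construction for the ``if'' direction and the sutured-manifold/taut-surface machinery for the ``only if'' direction.
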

Because of the importance of the order of the parameters defining a pretzel knot, we want to stress that fiberedness is a subtle property when we consider re-ordering of parameters. For example $P(1,5,7,-5,-2)$ is a Type 2A fibered pretzel knot and all its mutants, that is, all the different Type 2 pretzel knots obtained by reordering the parameters, for example $P(1,-5,7,5,-2)$, are still fibered. On the other hand, if we consider a Type~2B fibered pretzel knot, such as $P(3,-7,5,-5,8)$, the order of the parameters is crucial: the mutant $P(3,5,-7,-5,8)$ is not fibered. 

In this article, we aim to establish which \emph{fibered} pretzel knots are ribbon. The general question of understanding which pretzel knots are slice and/or ribbon is very far from settled. Many of the known results use as a starting point the fact that the double cover of $S^3$ branched over a slice knot $K$, call it $Y_K$, bounds a rational homology ball. If one can show that $Y_K$ does not bound such a ball, then $K$ is not slice. When dealing with pretzel knots, this approach has the following interesting feature: if we start with a pretzel knot $K=P(p_1,\dots,p_n)$ and $Y_K$ does not bound a rational homology ball, we get for free that none of the mutants of $K$ is slice either. That is, we conclude that no matter what order we choose for the set of parameters $\{p_1,\dots,p_n\}$, a pretzel knot with those parameters is not slice. On the other hand, if we show that $Y_K$ does bound a rational homology ball, and say we manage to exhibit a slice disk for $K=P(p_1,\dots,p_n)$, then all the other pretzel knots defined with the same set of parameters but a different order might not be slice but this particular obstruction derived from the double cover will vanish. There is one exception to all this: 3-stranded pretzel knots are determined by their double covers, since cyclic permutation and reversal of order account for all possible orderings of three parameters. 

The first systematic study of the slice ribbon conjecture for pretzel knots was carried out by Greene and Jabuka \cite{GJ} who dealt with 3-stranded \emph{Type 1} pretzel knots. They show that if no parameter is  unitary, then such a pretzel is ribbon if and only if it is of the form $P(p,-p,q)$; if there are unitary parameters, then the pretzel knot is actually a 2-bridge knot and the slice ribbon conjecture has been proven for this class of knots in \cite{lisca}.

Chronologically, the next big step towards classifying slice pretzel knots was carried out in \cite{pretzel}, where it was shown that
\begin{thm}
    Let $K$ be a non-exceptional, non-unitary slice pretzel knot. Then, up to reordering of parameters,
    
    \begin{itemize}
        \item if K is Type 2, then  $K=P\{p_0,p_1,-p_1,p_2,-p_2,\dots,p_t,-p_t\}$, while
        \item if K is Type 3, then $K=P\{p_1,-p_1\pm1,p_2,-p_2,\dots,p_t,-p_t\}$.  
    \end{itemize}
    
\end{thm}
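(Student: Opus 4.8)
The plan is to obstruct sliceness through the double branched cover $Y_K=\Sigma_2(S^3,K)$, which bounds a smooth rational homology ball whenever $K$ is slice. Since $K$ is a pretzel knot, $Y_K$ is a Seifert fibered rational homology sphere over $S^2$, with exceptional fibers determined by the non-unitary $p_i$ and orbifold Euler number equal, up to sign and normalization, to $\sum_i 1/p_i$. First I would dispose of the locus where this number vanishes --- which can only happen for a controlled set of parameters, feeding into the ``exceptional'' family one sets aside --- and, elsewhere, replace $K$ by its mirror if necessary so that the Euler number is negative. Then $Y_K$ bounds the canonical negative definite plumbed $4$-manifold $X_\Gamma$, where $\Gamma$ is the associated star-shaped graph in Seifert normal form: a central vertex of negative weight together with one leg per parameter, the leg of $p_i$ being the linear chain with all weights $\leq -2$ read off from the continued fraction expansion of the Seifert pair attached to $p_i$.

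Next I would invoke Donaldson's Theorem~A. Sliceness gives a rational homology ball $W$ with $\partial W=Y_K$ (after reversing orientation as above), so $X_\Gamma\cup_{Y_K}(-W)$ is a closed negative definite $4$-manifold, whose intersection form is therefore diagonalizable. Hence the plumbing lattice $(\Z^{b_2},Q_\Gamma)$, with $b_2$ the number of vertices of $\Gamma$, embeds as a finite-index sublattice of the standard diagonal lattice $(\Z^{b_2},-\mathrm{Id})$; the embedding is moreover constrained by the linking form of $Y_K$ together with the metabolizer supplied by $W$.

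The heart of the proof is then a combinatorial classification of which such star-shaped plumbing lattices embed in a negative diagonal lattice. Each leg of $\Gamma$ is (essentially) a chain of $(-2)$-weighted vertices, which sits inside a diagonal lattice only in the standard $A_k$ fashion, via vectors of the form $e_j-e_{j+1}$; the central vertex, being adjacent to every leg, then rigidly dictates how these chains may share coordinates. Carrying out this case analysis --- in the spirit of, and relying on, Lisca's classification of lattice embeddings associated to sums of lens spaces \cite{lisca2007sums}, here adapted to the star-shaped setting --- forces the legs to occur in matched pairs, which at the level of parameters says exactly that the $p_i$ pair off as $\{p,-p\}$, with a single unmatched leg when the number of strands is odd. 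That unmatched leg is attached to the unique even parameter and becomes the free entry $p_0$ of the Type~2 conclusion; when the number of strands is even the leftover is instead a pair of legs whose lengths differ by one, which is precisely the discrepancy between the continued fractions of $p/1$ and $p/(p-1)$ and yields the term $\{p_1,-p_1\pm1\}$ of the Type~3 conclusion. Throughout, the three Types of Gabai's classification and the role of the even parameter must be treated case by case, as they produce genuinely different shapes of $\Gamma$; the non-unitary hypothesis is what keeps $\Gamma$ in this clean normal form, since unitary parameters create cancellations (a $\pm1$ with a $\mp1$, or a $\pm1$ with a $\mp2$) that either lower the number of strands or make $K$ a $2$-bridge knot, for which the slice--ribbon conjecture is already known \cite{lisca}.

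The step I expect to be the main obstacle is this last combinatorial classification: tracking every way the $A_k$-embeddings of the legs can interlock through the central vertex, and excluding the sporadic non-diagonal configurations, is delicate and long, and it is exactly here that a residual ``exceptional'' family survives the obstruction and must be removed from the statement. A secondary difficulty is the normalization in the first step --- guaranteeing that, after passing to the mirror, $Y_K$ really does bound a negative definite plumbing (controlling the sign of the orbifold Euler number and putting the Seifert data in normal form) --- done in tandem with keeping careful track of the torsion linking form so that the rational homology ball hypothesis is exploited to full strength.
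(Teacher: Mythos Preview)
This theorem is not proved in the present paper; it is quoted from \cite{pretzel} as a known input, so there is no proof here to compare your proposal against. That said, your outline is an accurate high-level sketch of the method of that reference, and it is also precisely the machinery the present paper deploys for its own new results (Propositions~\ref{p:2A}--\ref{p:3C}): Donaldson's diagonalization applied to the canonical negative definite star-shaped plumbing $X_\Gamma$ bounded by the double branched cover, followed by a combinatorial classification of lattice embeddings.

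One small inaccuracy worth flagging: in the canonical negative plumbing $\Gamma$ not every leg is a $(-2)$-chain. After the sign normalization described in Section~\ref{s:lattice}, the parameters that were already negative contribute single vertices of weight $n_i$ (these together form the Wu set), while only the positive parameters are blown up into $A_{p_i-1}$-chains of $(-2)$'s. The combinatorics you allude to is then not just $A_k$-chains interlocking through the central vertex, but rather the interaction of these chains with the high-weight Wu vertices; it is exactly this interaction (each $(-2)$-chain being forced to ``live inside'' the basis vectors of a single Wu vertex, cf.\ Observation~\ref{ob_uniquely_used_pair} and Figure~\ref{f:partial1}) that produces the pairing $\{p,-p\}$ in the conclusion.
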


Here an \emph{exceptional} pretzel knot is a Type 2 knot whose parameters consist of pairs of numbers of the same absolute value and opposite signs along with $(a,-a-2,-\frac{(a+1)^2}{2})$ with $a\equiv 1,97\mod(120)$ \cite{pretzel,KimLeeSong}. While these are conjecturally not even algebraically slice, and on a case by case basis it is easy to establish that they are not, they have resisted all attempts so far to establish this statement for all parameters $a$.

The case of 5-stranded \emph{Type 1} pretzel knots was addressed in \cite{bryant}. Particularly, Corollary~2 in that article tells us that in the absence of unitary parameters, if these pretzel knots are slice then the defining set of parameters is of the form $\{p_0,p_1,-p_1,p_2,-p_2\}$. At the time of writing, beyond 5-strands, the ribbon status of Type 1 pretzel knots was wide open.

Notice the constant themes:
\begin{itemize}
      \item When we have more than three strands, classifications of ribbon pretzel knots are generally stated only up to mutation.
    \item In the absence of unitary parameters, so far the evidence shows that ribbon pretzel knots follow a clear pattern of `paired up parameters' of opposite sign (with the particular tweaks depending on the parity of both the parameters and the number of parameters).
\end{itemize}

With the currently available techniques, dealing with mutation is difficult. One of the frequently used tools to distinguish mutants are twisted Alexander polynomials. These, while effective, involve intricate computations which do not play well with infinite families. In \cite{HeraldKirkLiv} this technique was used to show that the 24 distinct oriented mutants of $P(3, 7, 9, 11, 15)$ are mutually different in the topological concordance group. Directly relevant to our exposition here is the result of Miller \cite[Theorem~1.4]{miller} who is able to show that infinitely many 4-stranded pretzels with `the right parameters but the wrong order' are not slice. 

The results in the literature pinning down the possible sets of parameters defining ribbon pretzel knots do say something about the order. In fact, \cite[Proposition~2.1]{pretzel} presents a very simple algorithm describing ribbon disks for pretzels with paired up parameters that satisfy an adjacency condition. For example, the algorithm applied to $P(3,-3,5,-5,7)$ yields a ribbon disk while it does not for the mutant $P(3,5,-3,-5,7)$. It was in fact shown in \cite{HeraldKirkLiv} that this last pretzel knot is not slice.  Conjecturally \cite[Conjecture~2.4]{pretzel}, only the pretzel knots, without unitary parameters, for which the aforementioned algorithm yields a disk, are ribbon. In this article, we will call such pretzel knots \emph{detectably ribbon} (Cf. `simple ribbon' in \cite{bryant}).

We close this background section with a word about unitary parameters. In the following sections we will show that in their presence, ribbon pretzel knots do not exactly follow the pattern of `paired up parameters' (see for example Propositions~\ref{p:odd} and~\ref{p:3A}). Moreover, the bands describing the ribbon disks are not particularly simple in the standard pretzel projection of these knots (see Figures~\ref{f:bands} and~\ref{f:band3A1}). These examples show that the ``Pretzel Slice-Ribbon Conjecture" in \cite[Section~2]{bryant} needs to be amended to exclude the case of unitary parameters.

\section{Pretzel knots, graphs and Donaldson's obstruction}\label{s:lattice}

\subsection{Negative graphs.}
 Given a pretzel link $L=P(p_1,...,p_n)$, we start this section describing how to associate to $L$ a star-shaped graph $\Gamma_L$ which encodes the plumbing instructions to build a 4-manifold $X_{\Gamma_L}$, whose boundary $Y_L$ is the double cover of $S^3$ branched over $L$ (details of the plumbing construction can be found for example in \cite{gompf20234}). In what follows, when $L$ is clear, we will drop it from the notation. For a given $L$, there are many graphs $\Gamma$ with the described properties. Among them, at most one will encode a negative definite $X_\Gamma$ (and precisely one if $L$ is a knot. This is the standard form defined in \cite[Theorem~5.2]{NeuRay}). As explained further below, this negative definite manifold will play a crucial role in our study of sliceness of pretzel knots. 
 
 The first step is to associate to $L=P([1^{\pm d}],p_{d+1},\dots,p_n)$ the following plumbing graph, with a central vertex of weight $\mp d$ connected to $n-d$ vertices, each of weight $p_i$.
 \[
  \begin{tikzpicture}[xscale=1.3,yscale=-0.6]
    \node (A0_4) at (4.25, 3) {$p_n$};
    \node (A1_4) at (4, 3) {$\bullet$};
    \node (A2_2) at (1.65, 3) {$p_{d+1}$};
    \node (A2_2) at (1.65, 4) {$p_{d+2}$};
    \node (A2_3) at (3, 2.5) {$\mp d$};
    \node (A3_1) at (2, 4) {$\bullet$};
    \node (A3_2) at (2, 3) {$\bullet$};
    \node (A3_3) at (3, 3) {$\bullet$};
    \node (A4_4) at (4.45, 4) {$p_{n-1}$};
    \node (A4) at (4, 4) {$\bullet$};
     \path (A3_3) edge [-] node [auto] {$\scriptstyle{}$} (A4);
     \path (A3_3) edge [-] node [auto] {$\scriptstyle{}$} (A3_1);
     \path (A3_3) edge [-] node [auto] {$\scriptstyle{}$} (A1_4);
     \path (A3_3) edge [-] node [auto] {$\scriptstyle{}$} (A3_2);
     \draw [loosely dotted] (A3_1) .. controls (3,5)  .. (A4);
   \end{tikzpicture}
  \]
 %
%
 The manifold $Y_L$ obtained from this graph is a Seifert fibered space and it will bound a negative definite manifold if its \emph{Euler number}
 $$
 e(Y)=\mp d-\sum_{i=d+1}^n\frac{1}{p_i}
 $$
 is negative \cite[Theorem~5.2]{NeuRay}. If $e(Y)$ is positive, then $e(-Y)$ will be negative. The manifold $-Y$ is the double cover of $S^3$ branched over the mirror of $L$, which is the pretzel knot with parameters opposite to those of $L$. Conveniently, the properties we are interested in in this article, fiberedness and sliceness/ribbonness of knots, are invariant under mirror imaging, so, up to considering the mirror of $L$, we can always find a negative 4-manifold $X$ to work with. This negative definite manifold will be constructed as follows:
 \begin{enumerate}
     \item If $e(Y)$ is negative, then we obtain a graph $\Gamma$ defining a negative definite $X_\Gamma$ from the one above by substituting every vertex with positive weight $p_i>2$ with a linear graph of $p_i-1$ vertices of weight $-2$ connected to the central vertex. For every such substitution, we subtract $1$ from the weight of the central vertex.
     \item If $e(Y)$ is positive, we change all the signs of the weights in the graph above and then proceed as in the previous point.
 \end{enumerate}
This procedure is just an algorithmic description of a series of blow-ups and blow-downs which do not change $Y_L$ and yield a negative definite plumbed 4-manifold $X_\Gamma$ (see \cite{NeuRay} for a proof). For example, starting with the pretzel knot $L=P(-1,-1,2,3,-5)=P([1^{-2}],2,3,-5)$, we compute the associated Euler number
$$
e(Y_L)=2-\frac{1}{2}-\frac{1}{3}+\frac{1}{5}>0,
$$
and then follow the steps described in (2) above which are illustrated with the following graphs, yielding the rightmost one, which is the one producing a negative definite 4-manifold via plumbing.
 \[
  \begin{tikzpicture}[xscale=0.9,yscale=0.3]
    \node (A1) at (0,0) {$\bullet$};
    \node (A2) at (1,0) {$\bullet$};
    \node (A3) at (2,0) {$\bullet$};
    \node (A4) at (4,0) {$\bullet$};
    \node (A5) at (5,0) {$\bullet$};
    \node (A6) at (6,0) {$\bullet$};
    \node (A7) at (8,0) {$\bullet$};
    \node (A8) at (9,0) {$\bullet$};
    \node (A9) at (10,0) {$\bullet$};
    \node (A10) at (11,0) {$\bullet$};
    \node (A11) at (12,0) {$\bullet$};
    \node (A12) at (13,0) {$\bullet$};

    \node (W1) at (0,1) {$2$};
    \node (W2) at (1,1) {$2$};
    \node (W3) at (2,1) {$-5$};
    \node (W4) at (4,1) {$-2$};
    \node (W5) at (5,1) {$-2$};
    \node (W6) at (6,1) {$5$};
    \node (W7) at (8,1) {$-2$};
    \node (W8) at (9,1) {$-3$};
    \node (W9) at (10,1) {$-2$};
    \node (W10) at (11,1) {$-2$};
    \node (W11) at (12,1) {$-2$};
    \node (W12) at (13,1) {$-2$};

    \node (B1) at (1,-2.3) {$\bullet$};
    \node (B2) at (5,-2.3) {$\bullet$};
    \node (B3) at (9,-2.3) {$\bullet$};

    \node (L1) at (1.4,-2.3) {$3$};
    \node (L2) at (5.4,-2.3) {$-3$};
    \node (L3) at (9.4,-2.3) {$-3$};
   
    \path (A1) edge [-] node [auto] {$\scriptstyle{}$} (A2);
    \path (A2) edge [-] node [auto] {$\scriptstyle{}$} (A3);
    \draw [->](2.7,0) --  (3.3,0);
    \path (A4) edge [-] node [auto] {$\scriptstyle{}$} (A5);
    \path (A5) edge [-] node [auto] {$\scriptstyle{}$} (A6);
    \draw [->](6.7,0) --  (7.3,0);
    \path (A7) edge [-] node [auto] {$\scriptstyle{}$} (A8);
    \path (A8) edge [-] node [auto] {$\scriptstyle{}$} (A9);
    \path (A9) edge [-] node [auto] {$\scriptstyle{}$} (A10);
    \path (A10) edge [-] node [auto] {$\scriptstyle{}$} (A11);
    \path (A11) edge [-] node [auto] {$\scriptstyle{}$} (A12);

    \path (A2) edge [-] node [auto] {$\scriptstyle{}$} (B1);
    \path (A5) edge [-] node [auto] {$\scriptstyle{}$} (B2);
    \path (A8) edge [-] node [auto] {$\scriptstyle{}$} (B3);
    
\end{tikzpicture}
  \]


From the plumbing construction, it is immediate to see that $b_2(X_\Gamma)$ is the number of vertices in the graph. Moreover, in the natural basis of $H_2(X_\Gamma)$ given by the spheres represented by the vertices of the graph, we have that the intersection form of $X_\Gamma$ coincides with the incidence matrix of the graph, which we will denote $Q_\Gamma$. We will say that the graph $\Gamma$ is negative definite when $Q_\Gamma$ has this property.

\subsection{Donaldson's obstruction.}
The reason for constructing a negative definite four manifold bounded by the double cover of $S^3$ branched over a pretzel knot is that we want to use Donaldson's Theorem~A \cite{Donaldson}  to obstruct sliceness. This obstruction has been described many times in the literature and, in the particular form we will be using it, it was introduced in \cite{lisca}. If a knot $K$ is slice (or a link is $\chi$-slice), then its double branched cover $Y_K$ bounds a rational homology ball $W$. If $Y_K$ also bounds a negative definite manifold $X$, we can construct a closed negative definite  4-manifold as $C:=X\cup_{Y_K}-W$. Donaldson's Theorem~A tells us then that the intersection form $Q_C$ is diagonalizable. In other words, the lattice $(H_2(C),Q_C)$ is lattice isomorphic to $(\Z^{b_2(C)},-\mathrm{Id})$. Moreover, since $W$ is a rational homology ball, a straightforward application of the Mayer-Vietoris sequence with rational coefficients shows that $H_2(X)$ injects into $X_2(C)$ and, in particular $b_2(X)=b_2(C)$. This in turn implies that there is a \emph{lattice embedding} of $(\Z^{b_2(X)},Q_X)$ into $(\Z^{b_2(C)},-\mathrm{Id})$, that is, there is an injection $\iota:\Z^{b_2(X)}\rightarrow\Z^{b_2(C)}$ such that $Q_X(a,b)=-\mathrm{Id}(\iota(a),\iota(b))$. The existence of this injection is derived from the existence of a slice disk for $K$. In the proofs in the next section we will show that certain families of pretzel knots are not slice by obstructing the existence of $\iota$. 

Going back to the concrete case of pretzel knots, once we have associated to a knot the negative definite graph $\Gamma$, with say $k$ vertices, Donaldson's obstruction amounts to understanding when is there an embedding of the lattice $(\Z^k,Q_\Gamma)$ into the standard negative definite lattice of the same rank. Because of the lattice vocabulary, we will often refer to the number of vertices in a given graph as the \emph{rank} of the graph. If there is an embedding $\iota$, then the image through this embedding of the homology class associated to each vertex $v_i$ can be expressed as a linear combination of the basis vectors $\{e_1,\dots,e_k\}$ spanning the standard negative definite lattice (i.e.\ $e_i\cdot e_j=-\delta_{ij}$ for all indices). Many times, instead of writing $\iota(v_i)=\sum\alpha_j^ie_j$, by a slight abuse of notation, we will omit the $\iota$. When we say \emph{a graph $\Gamma$ embeds} (resp.\ \emph{$\Gamma$ does not embed}), we always mean that the lattice associated to it embeds (resp.\ does not embed) into the standard negative lattice of the same rank. 

To determine the existence or impossibility of an embedding, we will manipulate the graphs and potential embeddings in several ways. There is some associated technical jargon, mostly intuitive expressions, which are generally abuse of notation. Among them:
\begin{itemize}
    \item Many times we claim that a vertex or a subgraph \emph{has a particular embedding}. This is to be interpreted up to appropriate automorphisms of $(\Z^k,-\mathrm{Id})$, which include relabeling of the basis vectors and sign changes. 
    \item When we say that a basis vector $e_s$ \emph{appears} in the embedding of a subgraph, we mean that there is a vertex $v$ in that subgraph for which in the expression $v=\sum\alpha_j e_j$ the coefficient $\alpha_s$ is not zero. 
    \item Some times we will look at a sublattice spanned by a subset $S$ of the $e_i$ and consider the \emph{associated graph} (which, importantly, might not be a subgraph of $\Gamma$). This graph with an embedding is obtained by considering only the vertices of $\Gamma$ where the basis vectors in $S$ appear. These vertices will have a new embedding, obtained by simply deleting from their original embedding the vectors not in $S$. The edges of the new graph will be such that they reflect the pairings of the basis vectors in $S$. A simple example illustrates the idea: in the following diagram, to the left we have an embedding of a graph into the lattice ($\Z^5=\langle e_1,\dots,e_5\rangle,-\mathrm{Id})$; letting $S$ be the subset $\{e_2,e_3\}$ we can consider the associated graph, which is given to the right of the diagram.
 \[
  \begin{tikzpicture}[xscale=1.7,yscale=0.4]
    \node (A1) at (0,0) {$\bullet$};
    \node (A2) at (1,0) {$\bullet$};
    \node (A3) at (2,0) {$\bullet$};
    \node (A4) at (4,0) {$\bullet$};
    \node (A5) at (5,0) {$\bullet$};
    \node (A6) at (6,0) {$\bullet$};

    \node (W1) at (0,1) {\tiny{$e_1-e_2$}};
    \node (W2) at (1,1) {\tiny{$-e_1+e_3$}};
    \node (W3) at (2,1) {\tiny{$e_1+e_2$}};

    \node (B1) at (1,-2.3) {$\bullet$};
    \node (B2) at (1,-4.3) {$\bullet$};
    \node (B3) at (5,-2.3) {$\bullet$};

    \node (L1) at (1.4,-2.3) {\tiny{$-e_3+e_4$}};
    \node (L2) at (1.4,-4.3) {\tiny{$-e_4+e_5$}};
   
    \node (L3) at (4,1) {\tiny{$-e_2$}};
    \node (L4) at (6,1) {\tiny{$e_2$}};
    \node (L5) at (5.2,0) {\tiny{$e_3$}};
    \node (L6) at (5.2,-2.3) {\tiny{$-e_3$}};
    
    \path (A1) edge [-] node [auto] {$\scriptstyle{}$} (A2);
    \path (A2) edge [-] node [auto] {$\scriptstyle{}$} (A3);
    \draw [->,zigzag](2.5,0) --  (3.7,0);
    \path [-] (A4) edge [out= 60, in= 120] (A6);
    \path (A2) edge [-] node [auto] {$\scriptstyle{}$} (B1);
    \path (B1) edge [-] node [auto] {$\scriptstyle{}$} (B2);
    \path (A5) edge [-] node [auto] {$\scriptstyle{}$} (B3);
\end{tikzpicture}
  \]

\end{itemize}

In the proofs, we will make use several times of the following observation. A pretzel \emph{knot} has odd determinant, and, in particular, it is non-zero. The determinant of these knots coincides with the determinant of $Q_\Gamma$ so, in particular,
\begin{re}\label{r:useful}
    A graph with $n$ vertices associated to a pretzel \emph{knot} cannot embed into a standard negative lattice of rank less than $n$.
\end{re}%
\noindent Expressed in more general algebraic terms, if the determinant of $Q_\Gamma$ is non-zero then the lattice associated to $\Gamma$ cannot embed into a lattice of smaller rank.

\subsection{Signature.}
We close this section with some useful facts about the computation of the signature of pretzel knots. A formula due to Saveliev \cite[Theorem~5]{Sav00} expresses the signature of a pretzel knot $K$ as
\begin{equation}\label{e:sign}
  \sigma(K)=\mathrm{sign}(Q_{\Gamma})- Q_{\Gamma}(w,w),  
\end{equation}
where sign stands for the signature  and $w\in\mathrm{H_2}(X_{\Gamma};\Z)$ is the so-called Wu class. This class is the unique integral lift to $\mathrm{H_2}(X_{\Gamma};\Z)$, with coordinates 0 or 1 in the basis of $\mathrm{H_2}(X_{\Gamma};\Z)$ given by the vertices of the graph, of the unique solution in $\mathrm{H_2}(X_{\Gamma};\Z_2)$ of the equation
\begin{equation}\label{e:cong}
Q_\Gamma(w,x)\equiv Q_\Gamma(x,x)\mod 2\quad\quad\forall x\in\mathrm{H_2}(X_{\Gamma};\Z).
\end{equation}
There is an easy algorithm to determine the Wu set, that is the set of vertices $v_i^w$ of the graph which satisfy $w=\sum_i v_i^w$ [see for example \cite[Section 3.2]{pretzel} and \cite{NeuRay}]. An important feature that we want to note here is that, for ribbon pretzel knots and when working with a negative definite graph, we can always assume that the embedding of the Wu class in the closed manifold $X_\Gamma\cup W$ (where $W$ is a rational homology ball) is the sum of all the basis vectors of the standard negative definite lattice, in symbols, $\iota(w)=\sum_i^ke_i$. Indeed, as $\det(Y_K)=\det(Q_\Gamma)$ is odd, $\iota(H_2(X_\Gamma))\subset H_2(X_\Gamma\cup W)$ has odd index, so reduction modulo 2 is surjective; thus in this case, Equation~\eqref{e:cong} extends to all $x\in H_2(X_\Gamma\cup W)$ and via the embedding $\iota$ reads
$$
-\mathrm{Id}(\iota(w),x)\equiv -\mathrm{Id}(x,x)\mod 2\quad\quad\forall x\in\mathrm{H_2}(X_\Gamma\cup W;\Z).
$$
Considering this last congruence in particular for the classes $x=e_i$, for each standard basis vector $e_i$, we obtain that all the coefficients appearing in the expression of $\iota(w)$ need to be odd (and in particular non-zero). Moreover, since ribbon knots have signature zero, Equation~\eqref{e:sign} reads $-k=Q_\Gamma(w,w)=-\mathrm{Id}(\iota(w),\iota(w))$, which immediately implies that the coefficients need to be at most 1 in absolute value. By eventually applying some automorphisms of $(\Z^k,-\mathrm{Id})$, sending a basis element $e_i$ to $-e_i$, we can assume all the coefficients are $+1$ and therefore $\iota(w)=\sum_i^ke_i$ as claimed.

\section{Proofs}\label{s:lemmas}

This lengthy section is devoted to the proofs of the different statements in the introduction. We begin with fibered ribbon Type 1  knots, dealt with completely in Proposition~\ref{p:odd}. Once this is settled, we move on to Types~2 and~3. We distinguish the case of non-unitary parameters, for which much is known, from the one with unitary parameters, for which much more work needs to be done in order to establish the desired results. 

\begin{prop}\label{p:odd}
The pretzel knot $\pm P(1,1,1,1,-3,-3,-3)=\pm 10_{75}$ is the only Type 1 fibered ribbon pretzel knot. 
\end{prop}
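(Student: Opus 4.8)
The plan is to use Gabai's Theorem~\ref{t:Gabai} to pin down the finite list of candidate Type~1 fibered pretzel knots, then apply the lattice-embedding obstruction of Section~\ref{s:lattice} to rule out all but one of them, and finally exhibit an explicit ribbon disk for the survivor. By Theorem~\ref{t:Gabai}, a Type~1 pretzel knot $K$ fibers if and only if every $p_i \in \{\pm 1, \mp 3\}$ with at least one $p_i = \pm 1$, so up to mirroring we may write $K = P([1^d], -3, \ldots, -3)$ with $d \geq 1$ and, say, $c$ copies of $-3$. First I would dispose of the cases with $c = 0$ (then $K$ is the unknot, not prime) and reduce to $d \geq 1$, $c \geq 1$. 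Since a ribbon knot has signature zero, the natural first filter is Saveliev's formula~\eqref{e:sign}: compute $\sigma(K)$ for $P([1^d], (-3)^c)$ using the Wu-class algorithm on the standard negative-definite graph $\Gamma$, and observe that $\sigma$ vanishes only for finitely many pairs $(d,c)$. This should already cut the candidate list down to a handful.

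Next I would run Donaldson's obstruction on each surviving candidate. Following the recipe after Figure~\ref{graph}: for $P([1^d],(-3)^c)$ the Euler number is $e(Y) = -d + c/3$, so I pick the mirror making this negative, build the star-shaped graph with central weight $-d$ (or its sign-flip) and $c$ legs of weight $-3$, and then — since $-3 < 0$ already — the graph is negative definite without further blow-ups once the sign is right; if instead we are in the $e(Y)>0$ branch the legs become $+3 \mapsto$ chains of two $-2$'s with the central weight adjusted. In either case the rank is modest, and by Remark~\ref{r:useful} the graph must embed into $(\Z^k, -\mathrm{Id})$ of the \emph{same} rank $k$. I would analyze these embeddings by hand: the $-3$-legs (or the $-2$-chains) force rigid local patterns on the basis vectors $e_i$, and the central vertex of large negative weight $-d$ needs $d$ distinct squares summing correctly, which becomes impossible once $d$ is large relative to $c$. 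Combined with the signature constraint $\iota(w) = \sum e_i$ from the end of Section~\ref{s:lattice}, I expect every candidate except $P(1,1,1,1,-3,-3,-3)$ (i.e.\ $d = c = 4$... — here $d=4$, $c=3$) to be obstructed.

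Finally, for the one surviving knot $\pm P(1,1,1,1,-3,-3,-3) = \pm 10_{75}$, I would exhibit a ribbon disk explicitly. The cleanest route is to recall that $10_{75}$ is known to be ribbon (it appears in the standard ribbon-knot tables), but to keep the paper self-contained I would instead draw a band move on the pretzel projection: using the unitary parameters, a $+1$ tube can be slid against a $-3$ tube, and after one or two such moves the diagram simplifies to an obvious unlink of two components, which gives a ribbon disk with one saddle per component. This is precisely the kind of "ribbon disk outside the standard paired-parameter pattern" the introduction advertises, so some care in the picture is warranted.

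\medskip

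\noindent\textbf{Main obstacle.} The delicate step is the embedding analysis for the small-$(d,c)$ cases that survive the signature filter: the signature-zero condition does not by itself kill $P(1,1,1,1,-3,-3,-3)$ (which is genuinely ribbon) nor, a priori, a few nearby candidates, so the lattice argument has to be run carefully enough to obstruct the non-ribbon ones while of course \emph{succeeding} for $10_{75}$ — and then the explicit band move must be produced to certify that surviving case. Getting the case division between the $e(Y) \lessgtr 0$ branches right, and handling the $d$ vs.\ $c$ interaction in the central-vertex embedding without gaps, is where the real work lies.
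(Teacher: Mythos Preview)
Your outline matches the paper's proof almost exactly: Gabai's list, Saveliev's signature formula on the negative-definite star graph, Donaldson's lattice obstruction, and an explicit band move for $10_{75}$. One substantive correction, though: the signature filter does \emph{not} cut the list down to finitely many pairs $(d,c)$. On the branch with $e(Y)<0$ (central weight $-d$, $c$ legs of weight $-3$, rank $c+1$), equation~\eqref{e:sign} gives $\sigma(K)=0 \Leftrightarrow c = d-1$, an infinite one-parameter family indexed by $d\ge 2$. (The other branch, $e(Y)>0$, \emph{is} killed outright by the signature, as you anticipate.) So the lattice argument cannot be run ``on each surviving candidate'' individually; it must dispose of infinitely many graphs at once.

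The paper does this with a short counting lemma you only gesture at: any two orthogonal weight~$-3$ vectors share either zero or two basis vectors, so three of them can be packed into rank~$4$, but a fourth one orthogonal to all three can share no basis vector with them. Hence $m$ pairwise-orthogonal weight~$-3$ vertices need strictly more than $m+1$ basis vectors whenever $m\neq 3$, and since the graph has exactly $m+1$ vertices this contradicts Remark~\ref{r:useful}. Only $m=3$ (i.e.\ $d=4$, $c=3$) survives, and there the embedding exists and the knot is $10_{75}$. Your phrase ``rigid local patterns'' is pointing in the right direction, but this packing count is the actual content of the obstruction and should be made explicit.
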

\begin{proof}
From Gabai's classification (Theorem~\ref{t:Gabai}) of fibered pretzel knots, we know that Type 1 fibered pretzel knots are of the form $K=P(1,\dots,1,-3,\dots,-3)$ or $\bar K=P(-1,\dots,-1,3,\dots,3)$ with at least one unitary parameter and an odd number of parameters. Our task is to determine which among these are ribbon and it suffices to study the family of knots $K$. To this end, we will associate to $K$ a negative definite graph and we will try to embed it into the standard negative lattice of the same rank (see Sections~\ref{s:background} and~\ref{s:lattice} for notation and strategy). 

In order to determine which is the negative definite graph associated to a Type~1 fibered knot with $n$ parameters $K=([1^d],-3,\dots,-3)$  we need to first compute and understand the sign of its Euler number
$$
e(Y_K)=\frac{n-d}{3}-d.
$$
If $n<4d$, then $e(Y_K)<0$ and the negative definite graph $\Gamma_K$, with the Wu-set marked in red, is
 \[
  \begin{tikzpicture}[xscale=1.3,yscale=-0.6]
    \node (A0_4) at (4.25, 3) {$-3$};
    \node (A1_4) at (4, 3) {$\bullet$};
    \node (A2_2) at (1.65, 3) {$-3$};
    \node (A2_2) at (1.65, 4) {$-3$};
    \node (A2_3) at (3, 2.4) {$-d$};
    \node (A3_1) at (2, 4) {$\bullet$};
    \node (A3_2) at (2, 3) {$\bullet$};
    \node (A3_3) at (3, 3) {$\bullet$};
    \node (A4_4) at (4.45, 4) {$-3$};
    \node (A4) at (4, 4) {$\bullet$};
    \draw [color=red] (3,3) ellipse (.15 and -.33);
     \path (A3_3) edge [-] node [auto] {$\scriptstyle{}$} (A4);
     \path (A3_3) edge [-] node [auto] {$\scriptstyle{}$} (A3_1);
     \path (A3_3) edge [-] node [auto] {$\scriptstyle{}$} (A1_4);
     \path (A3_3) edge [-] node [auto] {$\scriptstyle{}$} (A3_2);
     \draw [loosely dotted](A3_1) .. controls (3,5)  .. (A4);
   \end{tikzpicture}
  \]
On the other hand, if $n>4d$, then $e(Y_K)>0$ and the negative definite graph $\Gamma'_K$ with its corresponding Wu-set in red is
 \[
  \begin{tikzpicture}[xscale=1.3,yscale=-0.6]
    \node (A0_4) at (4, 2.4) {$-2$};
    \node (A2_2) at (2, 2.4) {$-2$};
    \node (A2_2) at (1,2.4) {$-2$};
    \node (A2_3) at (3, 2.4) {$2d-n$};
   
   \node (A3_0) at (1, 3) {$\bullet$};
     \node (A1_4) at (4, 3) {$\bullet$};
     \node (A3_5) at (5, 3) {$\bullet$};
     \node (A3_2) at (2, 3) {$\bullet$};
    \node (A3_3) at (3, 3) {$\bullet$};
    \node (A3_1) at (2, 4) {$\bullet$};
    \node (A4) at (4, 4) {$\bullet$};
    \node (B) at (1.2, 4.5) {$\bullet$};
    \node (C) at (4.7, 4.5) {$\bullet$};
   
    \draw [color=red] (1,3) ellipse (.15 and -.33);
    \draw [color=red] (5,3) ellipse (.15 and -.33);
    \draw [color=red] (1.2, 4.5) ellipse (.15 and -.33);
    \draw [color=red] (4.7, 4.5) ellipse (.15 and -.33);
    \node at (5,2.4) {$-2$};

    \node (A4_4) at (4.3, 3.8) {$-2$};
    \node at (5, 4.5) {$-2$};
    \node at (0.8, 4.5) {$-2$};
    \node at (1.6, 3.8) {$-2$};
    
    \draw [color=red] (3,3) ellipse (.15 and -.33);
     \path (A3_3) edge [-] node [auto] {$\scriptstyle{}$} (A4);
     \path (A3_0) edge [-] node [auto] {$\scriptstyle{}$} (A3_2);
     \path (A3_2) edge [-] node [auto] {$\scriptstyle{}$} (A3_3);
     \path (A3_3) edge [-] node [auto] {$\scriptstyle{}$} (A1_4);
     \path (A1_4) edge [-] node [auto] {$\scriptstyle{}$} (A3_5);
     \path (A3_3) edge [-] node [auto] {$\scriptstyle{}$} (A3_1);
     \path (A3_3) edge [-] node [auto] {$\scriptstyle{}$} (A4);
     \path (A4) edge [-] node [auto] {$\scriptstyle{}$} (C);
     \path (A3_1) edge [-] node [auto] {$\scriptstyle{}$} (B);
     \draw [loosely dotted] (A3_1) .. controls (3,5)  .. (A4);
   \end{tikzpicture}
  \]

%
%
We assume $K$ is slice. Then $\sigma(K)=0$ and Equation~\eqref{e:sign} reads $0=-(n-d+1)+d$ for $\Gamma_K$ and $0=-(2(n-d)+1)+2(n-d)-2d+n$ for $\Gamma'_K$. The latter condition immediately yields a contradiction: $n=2d+1$ is incompatible with $n>4d$, the negative definite condition of the graph. As such, we will restrict attention to graphs of the first kind with the constraint $n+1=2d$.

Since $K$ is slice, the lattice $(\Z^{\mathrm{rk}(\Gamma)},Q_\Gamma)$ embeds into the standard negative lattice of the same rank. With the notation conventions of Section~\ref{s:lattice}, we will show that this only happens for the knot $10_{75}$. We will argue depending on the value of $m:=n-d$, which in our case can be written $m=d-1$.

Notice that for $m\in\{1,2\}$ the manifold $Y_K$ is a lens space. In these two cases, while it is immediate to show by hand that there is no embedding, the non-existence follows from work of Lisca \cite{lisca}. 

When $m=3$, our constraint implies $d=4$, corresponding to the knot $10_{75}$ and an embedding into the diagonal negative definite lattice of rank 4 is as follows. The central vertex $v_c$, which is the Wu class, has embedding $v_c=e_1+e_2+e_3+e_4$. The three weight 3 vertices have embeddings $v_1=-e_1+e_4-e_2$, $v_2=-e_1+e_3-e_4$ and $v_3=-e_1+e_2-e_3$. The manifold $Y_K$ and this embedding are well known and studied (see for example \cite[Figure~23]{Stipsicz}).

If $m> 3$, we claim there is no embedding. We are trying to embed $m$ weight $-3$ orthogonal vertices and the central vertex into $(\Z^{m+1},-\mathrm{Id})$. We will argue that the graphs we are considering embed only in standard lattices of rank strictly larger than $m+1$.  Having embedded the central vertex $v_c=e_1+e_2+\cdots+e_{m+1}$, the only embedding of a linked weight $-3$ vertex is of the form $v=-e_i+e_j-e_k$. It is immediate to see that the minimum number of basis vectors needed to embed:
\begin{itemize}
    \item one weight $-3$ vertex is three, $v_1=-e_1+e_2-e_3$; 
    \item two orthogonal weight $-3$ vertices is four,  $v_1=-e_1+e_2-e_3$ and $v_2=-e_1-e_2+e_4$;
    \item three orthogonal weight $-3$ vertices is four, $v_1=-e_1+e_2-e_3$, $v_2=-e_1-e_2+e_4$, and $v_3=-e_1+e_3-e_4$.
\end{itemize}
Notice that, once the embedding of a set of three orthogonal weight $-3$ vertices using four basis vectors is fixed, any further weight $-3$ orthogonal vertex linked to the central vertex cannot share any basis vectors with these three. Similarly, once a set of $3l$ orthogonal weight $-3$ vertices using $4l$ basis vectors is fixed, any further weight $-3$ orthogonal vertex cannot share any basis vectors with these.  It follows easily that when embedding $n$ weight $-3$ orthogonal vertices into a rank $k$ lattice, the difference $k-n$ is minimal when $n = 0 \mod 3$, in which case $k-n = n/3$, so in particular is greater than 1 whenever $n>3$.

To finish the proof, we need to show that the knot $10_{75}$ is indeed ribbon. While this information is available in KnotInfo \cite{knotinfo}, for completeness and comparison we include the band moves that turn this knot into a disjoint union of two unknots, both in the minimal alternating projection of $10_{75}$ and in the standard projection of this knot in pretzel form. See Figure~\ref{f:bands}.
\end{proof}

\begin{figure}
\centering
\includegraphics[width=0.85\textwidth]{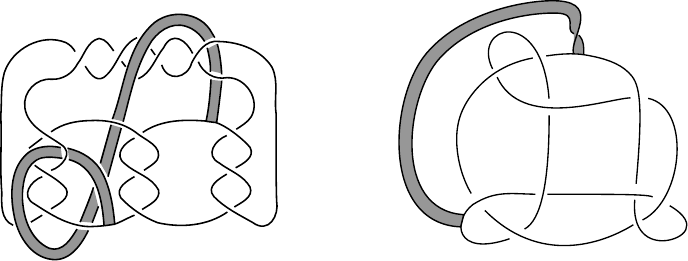}
\caption{To the left, a pretzel projection of $K=P([1^4],-3,-3,-3)$ with a band describing a move to two unlinked unknots. To the right, the same band after isotoping $K$ to an alternating projection that allows us to identify it as the knot $10_{75}$.}
\label{f:bands}
\end{figure}

\begin{re}
Notice that we have actually proved that  $P([1^4],-3,-3,-3)$ is the only Type 1 fibered \emph{slice} pretzel knot. This slice knot happens to be ribbon, which could potentially be a more restrictive condition.
\end{re}

We move on to analyze pretzel knots of Types 2 and 3. From the point of view of sliceness/ribonness these were thoroughly studied in \cite{pretzel}, but in that article the parameters $\pm1$ were excluded from the discussion. We divide the rest of this section in two. The first subsection will deal with the case of no $\pm1$ parameters and collect known facts about this class. The second subsection will deal with the remaining cases which need to be studied from scratch from the sliceness perspective.

\subsection{Pretzel knots of Types 2 and 3 with no unitary parameters.}
We are not aware of anywhere in the literature where the results from  Gabai's classification of pretzel knots of Types 2 and 3, Theorem~\ref{t:Gabai}, have been contrasted with \cite[Theorem~1.1]{pretzel}. We state here what we obtain when comparing these two classification results.
\begin{enumerate}
    \item Type 2A fibered pretzel knots are never slice. Indeed, all Type 2A pretzel knots are of the form \textbf{(p2)} or \textbf{(p3)} in \cite{pretzel}. In that article, Lemmas~5.4 and~5.5 establish, under the only assumption of these knots being slice, that the number of positive and negative parameters must differ exactly by one, whereas Type 2A requires this difference to be two.
    \item All Type 2B fibered pretzel knots have ribbon mutants. Stated otherwise, the set of parameters defining these pretzel knots can be reordered so as to obtain a fibered ribbon pretzel knot. 
    \item Type 3A fibered pretzel knots cannot be slice. Indeed, these knots have an unequal number of positive and negative parameters, which excludes them from being slice by \cite[Lemma~5.3]{pretzel}. (Note that  Type 3 pretzel knots are of the form \textbf{(p1)}.) 
    
    Type 3B pretzel knots with no unitary parameters are never fibered: since the number of positive and negative parameters is equal, $L'$ has an even number of entries; however, a Type 3B fibered pretzel knot has an associated $L'$ with an odd number of entries.

     Finally, Type 3C pretzel knots are fibered if and only if there is a unique parameter of minimal absolute value. From the ribbon point of view and up to reordering, Type~3 ribbon pretzel knots are determined by a set of parameters which consists of a certain number of paired up parameters of the same absolute value and opposite sign, and a last unordered pair of the form $\pm(k,-k-1)$ with $k>1$. It follows that in order to be Type~3C fibered and ribbon, the parameter $k$ needs to be the unique minimal positive term. For example $P(-3,3,-3,4)$ is ribbon but not fibered, while $P(-3,3,-3,2)$ is ribbon and fibered.

\end{enumerate}

We summarise these observations in the following proposition, remarking that the first family corresponds to Type~2B knots and the second one to Type~3C in Gabai's classification.
\begin{prop}\label{prop_nonunitary}
    Let $K$ be a prime non-exceptional pretzel knot with no unitary parameters. Then, $K$ is fibered and ribbon up to reordering of parameters if and only if either 
    \begin{enumerate}
        \item $K=\pm P(q_1,\ldots,q_r,-q_1,\ldots,-q_r,k)$ with $q_i \geq 3$, $k$ even, or
         \item $K = \pm P(k,-k-1, q_1,\ldots,q_r,-q_1,\ldots,-q_r$) with $q_i \geq3$, $r\geq 0$, and $1 < k < q_i$ for all $i$.
    \end{enumerate}
\end{prop}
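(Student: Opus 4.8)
The plan is to obtain the proposition by confronting Gabai's fiberedness classification (Theorem~\ref{t:Gabai}) with the sliceness classification for non-exceptional, non-unitary pretzel knots \cite[Theorem~1.1]{pretzel}, and then producing, for each surviving parameter set, a ribbon disk via the algorithm of \cite[Proposition~2.1]{pretzel}. I would begin by disposing of Type~1: Theorem~\ref{t:Gabai} shows that a fibered Type~1 knot must have a unitary parameter, so in the non-unitary case there are none, and the statement concerns only Types~2 and~3.

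For the ``only if'' direction, suppose $K$ is fibered and ribbon. Being ribbon it is slice, so \cite[Theorem~1.1]{pretzel} pins down the multiset of defining parameters. If $K$ is of Type~2, this multiset is $\{p_0,p_1,-p_1,\dots,p_t,-p_t\}$ with $p_0$ the distinguished (hence even) parameter and the $\pm p_i$ odd; as $K$ is non-unitary, $q_i:=|p_i|\ge 3$, and with $k=p_0$ this is family~(1). If $K$ is of Type~3, the multiset is $\{p_1,-p_1\pm1,p_2,-p_2,\dots,p_t,-p_t\}$; since fiberedness, ribbonness and sliceness are mirror-invariant and neither member of the distinguished pair can be unitary, one may mirror and relabel so that the pair reads $\{k,-k-1\}$ with $k\ge 2$, and then $q_i:=|p_i|\ge 3$ for $i\ge 2$. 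Fiberedness must now be imposed: such a knot has equal numbers of positive and negative parameters, so it is of Gabai's Type~3B or~3C; but a non-unitary Type~3B knot never fibers (its auxiliary link $L'$ has an even number of entries, whereas a fibered Type~3B link has $L'$ with an odd number), so $K$ is of Type~3C, and a Type~3C knot fibers exactly when it has a unique parameter of minimal absolute value. The absolute values occurring are $k$, $k+1$, and each $q_i$ twice, so this uniqueness is equivalent to $k<q_i$ for all $i$, i.e.\ to $1<k<q_i$ for all $i$: family~(2).

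For the ``if'' direction I would show that each listed parameter multiset admits an ordering that is simultaneously fibered and ribbon, namely the one in which the pairs $q_i,-q_i$ are placed adjacently (and, in family~(2), the small pair is placed first). For family~(1), ordered as $P(q_1,-q_1,\dots,q_r,-q_r,k)$, the odd parameters split evenly in sign and the associated auxiliary link is, up to mirror and reordering, $\pm P(2,-2,\dots,2,-2,k)$; by Gabai's Type~2B the knot fibers (in the degenerate case $|k|=2$ it is instead of Type~2C and is isotoped to a fibered Type~3 knot). For family~(2), ordered as $P(k,-k-1,q_1,-q_1,\dots,q_r,-q_r)$, the knot is of Type~3C with $k$ its unique parameter of least absolute value, hence fibered. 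In both cases the adjacency hypothesis of \cite[Proposition~2.1]{pretzel} is met, so that algorithm produces an explicit ribbon disk and these knots are \emph{detectably} ribbon. Putting the Type~2 and Type~3 cases together yields precisely families~(1) and~(2).

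The step I expect to be the crux is the Type~3 part of the ``only if'' direction: one must correctly sort the slice candidates of \cite[Theorem~1.1]{pretzel} into Gabai's subcases~3B and~3C, invoke the fact that non-unitary Type~3B knots never fiber, and convert the ``unique minimal parameter'' criterion into the clean inequality $1<k<q_i$, all while tracking the $\pm1$ ambiguity in the distinguished pair and the mirroring used to normalize it. By contrast, Type~1 is immediate, and once Theorem~\ref{t:Gabai}, \cite[Theorem~1.1]{pretzel} and \cite[Proposition~2.1]{pretzel} are in hand, the Type~2 computation and the ribbon constructions are essentially bookkeeping.
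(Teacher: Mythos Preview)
Your proof is correct and follows essentially the same approach as the paper: both arguments combine Gabai's classification (Theorem~\ref{t:Gabai}) with the sliceness classification of \cite[Theorem~1.1]{pretzel} and the ribbon algorithm of \cite[Proposition~2.1]{pretzel}, and both use the parity observation that non-unitary Type~3B knots never fiber together with the ``unique minimal'' criterion for Type~3C. The only difference is organizational---you split into ``only if''/``if'' and start from the slice multiset, whereas the paper runs through Gabai's subtypes 2A, 2B, 3A, 3B, 3C in turn and checks sliceness for each---but the content is the same.
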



A final immediate remark regarding the order of the parameters:
When the order  is relevant to fiberedness, the condition is generally that we need parameters alternating sign; when we are dealing with ribonness, we need to be able to pair up the parameters of opposite sign and equal absolute value. In the introduction we have showcased this fact with the knot $P(3,-3,5,-5,7,-7,4)$ and its mutants.


\subsection{Pretzel knots of Types 2 and 3 with unitary parameters.}
In what follows, we analyze the sliceness of pretzel knots of Types 2 and 3 with at least one unitary parameter. Since our goal is to compare this set of knots with fibered pretzel knots, we divide our study in the same families as Gabai.

The first family is Type 2A, whose fibered members, up to mirroring, are of the form $K=P([1^d],q_1,\dots,q_s,2)$ where $|q_i|\geq 3$ odd and $d+s$ is even. We are interested in the set of parameters $\{q_1,\dots,q_s\}$ which we will separate into a set of positive parameters $\{p_1,\dots,p_t\}$ and a set of negative ones $\{n_1,\dots,n_r\}$. The fiberedness condition implies that $d+t-r=\pm2$.

\begin{prop}\label{p:2A}
Type 2A fibered pretzel knots with unitary parameters are not slice.
\end{prop}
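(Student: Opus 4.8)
The plan is to run the obstruction machinery of Section~\ref{s:lattice}, following the template of Proposition~\ref{p:odd}. Recall that, up to mirroring, the fibered Type~2A knots with a unitary parameter are $K=P([1^{d}],q_{1},\dots ,q_{s},2)$ with $d\ge 1$, $|q_{i}|\ge 3$ odd, $d+s$ even, and, writing $t$ and $r$ for the numbers of positive and negative $q_{i}$, the fiberedness constraint $d+t-r=\pm 2$. Suppose for contradiction that $K$ is slice. Then $\sigma(K)=0$, and $Y_{K}$ bounds a rational homology ball; after replacing $K$ by its mirror if necessary (harmless, since sliceness and fiberedness are mirror-invariant) we may arrange that the relevant Euler number has the sign for which the standard algorithm produces a negative definite graph $\Gamma$ in which, in particular, the even parameter $2$ contributes a single leaf of weight $-2$. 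Writing $k=\operatorname{rk}\Gamma$, we obtain a lattice embedding $\iota\colon(\Z^{k},Q_{\Gamma})\hookrightarrow(\Z^{k},-\mathrm{Id})$, and since $\sigma(K)=0$ the discussion at the end of Section~\ref{s:lattice} lets us normalize $\iota(w)=\sum_{i=1}^{k}e_{i}$, where $w$ is the Wu class.

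The first real step is to record $\Gamma$ and its Wu set explicitly. The graph is star-shaped: a central vertex $v_{c}$ whose weight is $-d$ lowered by one for each leg that is blown up (so of weight $\le -2$ after blowing down any $\pm 1$ vertices that appear); a chain of $|q_{i}|-1$ vertices of weight $-2$ replacing each odd parameter that is positive in the chosen mirror; a single heavy leaf of weight $\le -3$ for each odd parameter that is negative; and the single weight $-2$ leaf coming from the even parameter. Its Wu set is then computed by the usual recursion: each $-2$ chain contributes alternate vertices, each heavy leaf lies in the Wu set, and whether $v_{c}$ and the even-parameter leaf belong to it is governed by the parities of $d$, $t$, $r$. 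The point of this step is that the signature formula~\eqref{e:sign}, which for a slice knot reads $k=-Q_{\Gamma}(w,w)$, becomes an explicit identity: rewriting $k+Q_{\Gamma}(w,w)$ as $\sum_{v\notin W}1+\sum_{v\in W}\bigl(1+\mathrm{wt}(v)\bigr)+2|E_{W}|$ and using that every weight of $\Gamma$ is $\le -2$, the positive and negative contributions must cancel exactly, which already pins down the $q_{i}$ and the shape of the Wu set very rigidly.

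If this identity does not by itself contradict $d+t-r=\pm 2$, I would then push the embedding analysis as in Proposition~\ref{p:odd}. Using $\iota(w)=\sum_{i}e_{i}$ together with $\iota(v_{c})\cdot\iota(v_{c})=\mathrm{wt}(v_{c})$ and the fact that $\iota$ is injective and realizes the plumbing pairings, one constrains how the standard basis vectors distribute over the images of the Wu vertices: each heavy leaf of weight $q$ needs $|q|$ basis vectors in its image and, being orthogonal to the other heavy leaves and to the chains, may share basis vectors with them only two at a time, while a $-2$ chain of length $\ell$ is embedded most economically as an $A_{\ell}$-configuration using $\ell+1$ basis vectors, all of which must be fitted around $\iota(v_{c})$. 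Tallying the basis vectors forced by all the legs, by $v_{c}$, and by the single $-2$ leaf from the even parameter against the total $k$, and feeding in $d+t-r=\pm 2$, should produce a strict inequality: the even parameter $2$ supplies exactly the extra unit of ``waste'' that an embedding of a balanced configuration could absorb but this one cannot. The main obstacle is making this bookkeeping uniform in $d\ge 1$, in $s$, and over all admissible tuples $(q_{i})$: the feature absent from the non-unitary case of \cite[Lemmas~5.4--5.5]{pretzel} is precisely the large negative weight on $v_{c}$, which both inflates $k$ and reshapes the Wu set, so I would dispose of small $d$ and small $s$ directly (as the lens-space cases are handled in Proposition~\ref{p:odd}) and then isolate the contribution of $v_{c}$ and the even-parameter leaf via the associated graph spanned by the basis vectors occurring in $\iota(v_{c})$, showing that it cannot be completed once the odd parameters are this unbalanced. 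I expect no conceptual difficulty beyond this, but a somewhat intricate combinatorial argument, longer than---though of the same flavor as---the one in Proposition~\ref{p:odd}.
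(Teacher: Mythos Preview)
Your framework is the paper's: Donaldson obstruction, negative definite plumbing graph, normalized Wu class, signature formula. But what you have written is a plan, not a proof, and a couple of the concrete claims in that plan are wrong, while the actual combinatorial mechanism that finishes the argument is not the vague ``strict inequality'' you anticipate.

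First, two corrections. You only treat one graph, but there are two possible negative definite graphs $\Gamma_1,\Gamma_2$ depending on the sign of $e(Y_K)$; mirroring does not collapse these into a single case, because mirroring swaps which odd parameters become $-2$--chains and which become heavy leaves. (The $\Gamma_2$ case is easy---the central vertex has weight $d-r$, too small in absolute value to link $r$ long $-2$--chains---but it must be said.) More seriously, your description of the Wu set is wrong: in $\Gamma_1$ the $-2$--chains coming from the positive odd parameters contribute \emph{nothing} to the Wu set, because each such chain has even length $p_i-1$. The Wu set is exactly the $r$ heavy leaves $v_1,\dots,v_r$ together with the single $-2$ leaf $w'$ from the even parameter. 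In particular the signature identity~\eqref{e:sign} reads $\sum|n_i|+2=\sum p_i+d$, not the expression you would get from alternating Wu vertices along the chains.

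Second, the endgame is not a counting inequality. What the paper does, and what your plan does not anticipate, is: (i) the bound $|v_c\cdot v_c|\ge r+1$ forced by the $r+1$ Wu vertices all linking $v_c$ fixes the sign $d+t-r=+2$; (ii) the two basis vectors $f_1,f_2$ carrying $w'$ are shown never to appear in any $-2$--chain embedding (otherwise some $u^l_2$ would link $w'$); (iii) each $-2$--chain $U^j$ whose initial vertex uses $e^k_1$ with $k>1$ is forced to satisfy $p_j=|n_k|$ and to use exactly the vectors of $\iota(v_k)$, so all but $q:=t-(r-1)=3-d$ of the chains are paired off with heavy leaves; (iv) since $d\ge1$ one has $q\in\{1,2\}$, and in each case the graph associated to the remaining vectors $e^1_*$ is a star whose incidence matrix is diagonally dominant yet is embedded in a standard lattice of strictly smaller rank, contradicting Remark~\ref{r:useful}. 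Step~(iii)---the exact matching $|n_k|=p_j$, proved via an ``unused pair deletion'' observation---is the engine of the argument; your sketch does not contain it, and without it the bookkeeping you propose does not close.
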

\begin{figure}
  \begin{tikzpicture}[xscale=1.3,yscale=-0.6]
    \node (A0_4) at (4.2, 3.8) {$2$};
    \node (A1_4) at (4, 4) {$\bullet$};
    \node (A2_2) at (1.7, 3.8) {$p_1$};
    \node (A2_2) at (1.7, 5) {$p_t$};
    \node (A2_3) at (3, 3.4) {$-d$};
    \node (A3_1) at (2, 5) {$\bullet$};
    \node (A3_2) at (2, 4) {$\bullet$};
    \node (A3_3) at (3, 4) {$\bullet$};
    \node (B) at (3.2,5.5) {$\bullet$};
    \node at (3.2,5.9) {$n_1$};
    \node at (4.3, 5) {$n_r$};
    \node (A4) at (4, 5) {$\bullet$};
     \path (A3_3) edge [-] node [auto] {$\scriptstyle{}$} (A4);
     \path (A3_3) edge [-] node [auto] {$\scriptstyle{}$} (A3_1);
     \path (A3_3) edge [-] node [auto] {$\scriptstyle{}$} (A1_4);
     \path (A3_3) edge [-] node [auto] {$\scriptstyle{}$} (A3_2);
     \path (A3_3) edge [-] node [auto] {$\scriptstyle{}$} (B);
     \draw [dotted] (A3_2) -- (A3_1);
     \draw [dotted] (B) .. controls (3.6,5.5) .. (A4);
     
    \path[draw,thin,->] 
    (4.3,2.6) -- ++(0,-1.2cm)  -- (5.4,1.4);
    \node at (5.8,1.4) {$\Gamma_1:$};
    \node at (4.6,1) {\small{$e(Y_K)<0$}};
    
    \path[draw,thin,->] 
    (4.3,6) -- ++(0,1cm)  -- (5.4,7);
    \node at (5.8,7) {$\Gamma_2:$};
    \node at (4.6,7.5) {\small{$e(Y_K)>0$}};
   
          \node (B1) at (6.8, 0) {$\bullet$};
          \node at (6.8,-0.4) {$-2$};
         \node (B2) at (8.1, 0) {$\bullet$};
         \draw [dotted] (B1) -- (B2);
         \node at (8.1,-0.4) {$-2$};
         \draw[decorate,decoration={brace,amplitude=5pt,raise=.5cm},yshift=0pt] (6.7, 0) -- (8.2, 0) node [midway,yshift=.9cm]{$p_{1}\!-\!1$};
    \node (B3) at (9, 0) {$\bullet$};
    \draw [-] (B2) -- (B3);
    \node at (9,-0.4) {$-d\!-\!t\!-\!\!1$};
     \node (B4) at (10, 0) {$\bullet$};
     \draw [color=red] (10,0) ellipse (.15 and -.33);
     \draw [-] (B3) -- (B4);
     \node at (10.3,-0.4) {$-2$};

      \node (C1) at (7, 3) {$\bullet$};
          \node at (6.9,2.6) {$-2$};
         \node (C2) at (8.2, 2) {$\bullet$};
         \draw [dotted] (C1) -- (C2);
         \node at (7.9,1.8) {$-2$};
          \draw[loosely dotted] (6.8, 0.4) .. controls (6.7,1.4) ..  (7, 2.3);
         \draw[decorate,decoration={brace,amplitude=5pt,raise=.2cm,mirror},yshift=0pt] (7, 3) -- (8.2, 2) node [midway,sloped,rotate=60,below,yshift=-.4cm]{$p_{t}\!-\!1$};
         \draw [-] (C2) -- (B3);
        \node (D1) at (9.3, 2.2) {$\bullet$};
        \draw [color=red] (9.3,2.2) ellipse (.15 and -.33);
          \node at (9.7,2.4) {$n_1$};
           \draw  (D1) -- (B3);
         \node (D2) at (10, 1) {$\bullet$};
         \draw [color=red] (10,1) ellipse (.15 and -.33);
           \node at (10.4,1.1) {$n_r$};
        \draw[loosely dotted] (9.5, 2.1) .. controls (9.7,2.1) ..  (9.9, 1.3);
         \draw (D2) -- (B3);
        \node (E1) at (6.8, 6) {$\bullet$};
        \draw [color=red] (6.8,6) ellipse (.15 and -.33);
        \node at (6.4, 5.7) {$-p_1$};
        \node (E2) at (7.8, 6) {$\bullet$};
        \node at (7.8, 5.6) {$d\!-\!r$};
        \draw [-] (E1) -- (E2);
        \node (E3) at (8.8, 6) {$\bullet$};
        \node at (8.8, 5.6) {$-2$};
        \draw [-] (E2) -- (E3);
        \node (E4) at (6.9, 7.5) {$\bullet$};
        \draw [color=red] (6.9,7.5) ellipse (.15 and -.33);
        \draw [-] (E4) -- (E2);
        \draw[loosely dotted] (6.7, 6.4) .. controls (6.7,6.8) ..  (6.8, 7.2);
        \node at (6.5, 7.6) {$-p_t$};
        \node (F1) at (8.8, 7) {$\bullet$};
        \node at (8.8, 7.6) {$-2$};
        \node (F2) at (10, 7.9) {$\bullet$};
        \node at (10, 8.4) {$-2$};
        \draw [dotted] (F1) -- (F2);
         \draw [-] (F1) -- (E2);
         \draw[decorate,decoration={brace,amplitude=5pt,raise=.2cm},yshift=0pt] (8.8, 7) -- (10, 7.9) node [midway,sloped,rotate=-54,above,yshift=.35cm]{\tiny{$|n_{r}|\!-\!1$}};

         \node (D1) at (7.6, 8) {$\bullet$};
         \node at (7.9, 7.8) {$-2$};
         \node (D2) at (8.8, 9) {$\bullet$};
        \node at (9.1, 8.9) {$-2$};
        \draw [dotted] (D1) -- (D2);
         \draw [-] (D1) -- (E2);
         \draw[decorate,decoration={brace,amplitude=5pt,raise=.2cm,mirror},yshift=0pt] (7.6, 8) -- (8.8, 9) node [midway,sloped,rotate=-62,above,yshift=-.9cm]{\tiny{$|n_{1}|\!-\!1$}};

         \draw[loosely dotted] (7.8, 7) .. controls (8,7.1) ..  (8.3, 6.5);
   \end{tikzpicture}
\caption{Two possible negative definite graphs associated to a Type 2 pretzel knot $K$ with $d$ unitary parameters, depending on the value of $e(Y_K)$. The Wu class in each case is depicted in red.}
\label{f:2Aones}
\end{figure}
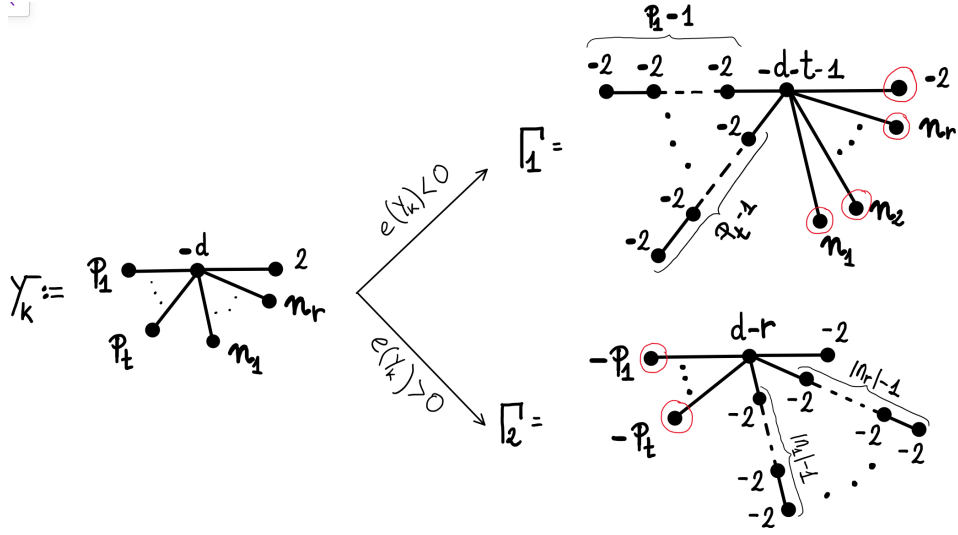
%
%
\begin{proof}
We will argue by contradiction: assuming such a knot $K$ is slice, we have that the Seifert manifold $Y_K$ bounds a rational homology ball, which in turn implies the existence of an embedding of one of the two possible negative graphs, $\Gamma_1$ and $\Gamma_2$, associated to $K$ which are depicted in \autoref{f:2Aones}. 

The existence of an embedding $\iota$ of $(\Gamma_1,Q_{\Gamma_1})$ into the standard negative lattice of the same rank, and the particular form of the embedding of the Wu class, imply that the central vertex of $\Gamma_1$, call it $v_c$, has weight at most $-r-1$ (each of the $r+1$ vertices in the Wu set shares at least one basis vector with the central vertex, and these are all different). On the other hand, since $K$ is fibered,  $d+t-r=\pm2$ and therefore, the weight of the central vertex satisfies $v_c\cdot v_c=-d-t-1=-r-1\mp2$. As this quantity is bounded above by $-r-1$, the sign of 2 is fixed, giving $v_c\cdot v_c=-r-3$, and $d+t-r=2$. 
Moreover, Equation~\eqref{e:sign} in this context reads:
\begin{equation}\label{e:rank}
 \sum_{i=1}^r|n_i|+2=\sum_{i=1}^t p_i - t +r+2 \Leftrightarrow \sum_{i=1}^r|n_i|+2=\sum_{i=1}^t p_i +d.
\end{equation}

Labelling the vertex with weight $n_i$ by $v_i$, and $w'$ the unique vertex in the Wu set with weight $-2$, we can fix the embedding of the Wu class as the sum of $v_i=e^i_1+\dots+e^i_{|n_i|}$ and $w'=f_1+f_2$, where the standard basis of the negative diagonal lattice is given by $\{e^1_1,\dots,e^r_{|n_r|},f_1,f_2\}$ and, via Equation~\eqref{e:rank}, the rank of $\Gamma_1$ is $\sum|n_i|+2$. We can moreover fix the embedding $v_c=-e^1_1-e^2_1-\dots-e^r_1-f_1+e^1_2-e^1_3$: we need the first $r+1$ basis vectors with these signs to connect the central vertex to the $v_i$ and to $w'$; the last two basis vectors yield the correct weight for $v_c$. Since $|n_i|\geq 3$, up to relabelling the vertices, we can pick the two basis vectors from the embedding of $v_1$. The only real choice we have made in the embedding of $v_c$ is the sign of $e^1_2$ (which forces the sign of $e^1_3$). Finally,  we label the $-2$-weight vertices arising from blowing up/down the $p_j$-weight vertex of $\Gamma_K$ by $u^j_1, \ldots , u^j_{p_j-1}$, where sub-indices increase moving away from $v_c$, denote by $U^j$ the union of these vertices, which we will refer to as a $-2$-chain, and define $U = \cup_j U^j$.
 
In the following, we will make extensive use of the following:
\begin{ob} \label{ob_uniquely_used_pair}
    Suppose there is a pair $e^j_l, e^j_{l+1}$ such that neither element appears in $\iota(U)$. Then by simply deleting this pair we obtain an embedding of a new graph $\Gamma'$ which differs from $\Gamma_1$ only in that the weight of the vertex $v_j$ has increased by 2, the weight of $v_c$ is either unchanged or increased by 2, and again corresponds to a pretzel knot. As the vertex set is unchanged, yet $\Gamma'$ embeds in a lattice of rank rk$(\Gamma)-2$, we arrive at a contradiction of Remark 3.1.
\end{ob}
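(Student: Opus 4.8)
The plan is to use the pair $e^j_l,e^j_{l+1}$ to force a genuine drop in the rank of the ambient diagonal lattice, contradicting Remark~\ref{r:useful}. Write $N=\mathrm{rk}\,\Gamma_1$ and let $\pi\colon\Z^{N}\to\Z^{N-2}$ be the coordinate projection forgetting $e^j_l$ and $e^j_{l+1}$; put $\iota'=\pi\circ\iota$. The first step is to check that, in passing from $\iota$ to $\iota'$, essentially nothing changes except two weights. A pairing $\iota'(u)\cdot\iota'(v)$ can differ from $\iota(u)\cdot\iota(v)$ only if one of $u,v$ has a nonzero coefficient on $e^j_l$ or $e^j_{l+1}$, and from the rigid form of the embedding established earlier I would read off that the only such vertices are $v_j$, whose embedding $e^j_1+\dots+e^j_{|n_j|}$ carries coefficient $1$ on each of the two vectors, and possibly $v_c$, whose embedding $-e^1_1-\dots-e^r_1-f_1+e^1_2-e^1_3$ has support meeting $\{e^j_l,e^j_{l+1}\}$ at most in a spoke $e^j_1$ or, when $j=1$, in $e^1_2$ and/or $e^1_3$; the remaining Wu-set vertices $v_i$ (with $i\neq j$) and $w'$ are supported on basis vectors disjoint from $\{e^j_l,e^j_{l+1}\}$, while the vertices of $U$ are excluded by hypothesis. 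Hence $\iota'$ is still injective, and only $\iota'(v_j)\cdot\iota'(v_j)$, $\iota'(v_c)\cdot\iota'(v_c)$ and $\iota'(v_c)\cdot\iota'(v_j)$ can change.

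The second step is to read off those changes and identify $\Gamma'$. Deleting two basis vectors occurring in $\iota(v_j)$ raises $\iota(v_j)\cdot\iota(v_j)$ by exactly $2$, so $v_j$ acquires weight $-|n_j|+2$, a legitimate pretzel parameter since $|n_j|\geq 3$. If neither deleted vector lies in $\mathrm{supp}\,\iota(v_c)$ --- the generic case, in which the deleted pair is internal to the arm $v_j$ --- then nothing else moves and $\Gamma'$ is precisely the canonical negative-definite plumbing graph of the pretzel knot obtained from $K$ by changing the parameter $n_j$ to $n_j+2$. In the configurations where a spoke of $v_c$ is also deleted, tracing through the explicit embedding shows that $\Gamma'$ is again the canonical negative-definite graph of a pretzel knot, with the central weight raised by $2$, possibly together with one extra isolated vertex of nonzero weight $-|n_j|+2$ coming from the detached arm $v_j$. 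In every case $\det Q_{\Gamma'}\neq 0$, since a pretzel \emph{knot} has odd determinant and determinants multiply over disjoint unions.

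To conclude: $\Gamma'$ has the same vertex set as $\Gamma_1$, so $\mathrm{rk}\,\Gamma'=N$, yet via $\iota'$ the lattice $(\Z^{N},Q_{\Gamma'})$ embeds into a diagonal lattice of rank $N-2$, contradicting the (determinant) form of Remark~\ref{r:useful}; this rules out the existence of such a pair. The step I expect to be the real obstacle is the case analysis of the second paragraph: pinning down, from the prescribed shapes of $\iota(v_c)$ and of the $\iota(v_i)$ and from the orthogonality of distinct arms of the star, exactly how the central weight transforms when the deleted pair meets $\mathrm{supp}\,\iota(v_c)$, and checking that one nonetheless always lands on the graph of a pretzel knot (up to an isolated vertex). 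Once $\Gamma'$ is identified, the contradiction with Remark~\ref{r:useful} is immediate.
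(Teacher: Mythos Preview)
Your approach is exactly the one the paper leaves implicit in the observation itself: project out the two unused basis vectors and invoke Remark~\ref{r:useful} on the resulting graph $\Gamma'$. There is no separate proof in the paper; the observation is stated as self-evident, and your proposal is a careful unpacking of that statement.

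One small correction to your case analysis. When exactly one of the deleted pair lies in the support of $\iota(v_c)$ --- namely the cases $j\neq 1,\ l=1$ and $j=1,\ l=3$ --- the central weight increases by $1$, not $2$, and simultaneously the edge $v_c\,$--$\,v_j$ is deleted (since the unique linking contribution is removed). Your phrase ``central weight raised by $2$'' is accurate only when both deleted vectors lie in $\iota(v_c)$, i.e.\ $j=1$ and $l\in\{1,2\}$. The paper's own statement shares this imprecision.

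This does not affect the conclusion. In the problematic cases $\Gamma'$ is the disjoint union of (i) the canonical negative-definite graph of the Type~2 pretzel knot $P([1^{d-1}],p_1,\dots,p_t,n_1,\dots,\widehat{n_j},\dots,n_r,2)$, which has odd determinant, and (ii) an isolated vertex of weight $-(|n_j|-2)$, which is odd and nonzero since $|n_j|\geq 3$ is odd. Hence $\det Q_{\Gamma'}\neq 0$, and your appeal to the determinant form of Remark~\ref{r:useful} goes through. So the argument is sound; just adjust the wording of the non-generic case.
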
 

On the other hand, if $f_i$, $i \in\{1,2\}$ were to appear in the embedding of some $u\in U$, then by orthogonality to $w'$ we would have $\iota(u)=f_1+f_2$ (up to sign). This would then imply that $v_c\cdot u = 1$, so $u=u^l_1$ for some $l$. As however $p_l\geq3$ for all $l$, there is a vertex $u^l_2$ whose embedding contains only one $f_i$, and thus links to $w'$, a contradiction.

 Combining these observations, we see that each $\iota(u^j_1)$ must contain a vector of the form $e^k_*$ and not $f_1$. If this vector is $e^k_1$ for some $k>1$, then the structure of the $-2$-chain immediately implies that (up to relabelling) $\iota(u^j_i)=e^k_i-e^k_{i+1}$ for each $i$, and so by Observation \ref{ob_uniquely_used_pair}, $|n_k| = p_j$ (recall that all parameters $n$ and $p$ are odd). Moreover, orthogonality implies straight away that $\iota(u^{s}_1)$ cannot contain $e_1^k$ for $s\neq j$. It follows that for each $k>1$, there is a unique $U^j$ whose embedding contains exactly the vectors in $\iota(v_k)$. So far we have established that the embedding of the graph $\Gamma_1$ is as in Figure~\ref{f:partial1}.
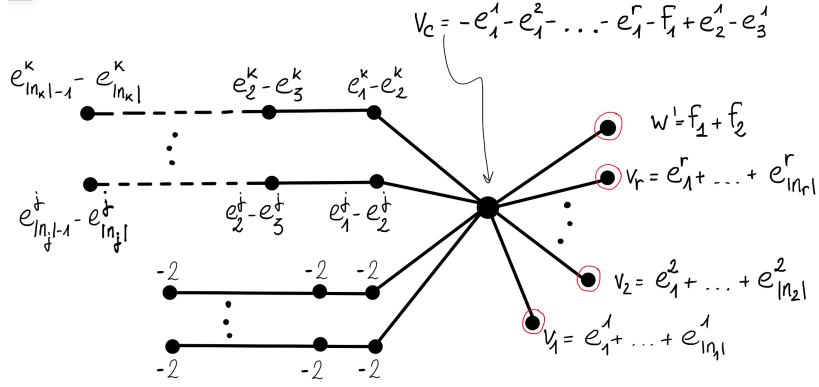
\begin{figure}[h]
   \begin{tikzpicture}[xscale=1.3,yscale=-0.6]
       \node (B1) at (5.7, -1) {$\bullet$};
          \node at (5.7,-1.6) {\tiny{${e_{|\!n_k\!|-1}^k\!-\!e_{|\!n_k\!|}^k}$}};
          \node (X1) at (7, -1) {$\bullet$};
          \draw [dotted] (B1) -- (X1);
          \node at (7,-1.6) {\tiny{$e_2^k\!-\!e_3^k$}};
         \node (B2) at (7.8, -1) {$\bullet$};
         \draw [-] (X1) -- (B2);
         \node at (7.8,-1.6) {\tiny{$e_1^k\!-\!e_2^k$}};
       \node (K1) at (5.7, 0.5) {$\bullet$};
          \node at (5.7,1) {\tiny{${e_{|\!n_j\!|-1}^j\!-\!e_{|\!n_j\!|}^j}$}};
          \node (Y1) at (7, 0.5) {$\bullet$};
          \draw [dotted] (K1) -- (Y1);
          \node at (7,1) {\tiny{$e_2^j\!-\!e_3^j$}};
         \node (K2) at (7.8, 0.5) {$\bullet$};
         \draw [-] (Y1) -- (K2);
         \node at (7.8,1) {\tiny{$e_1^j\!-\!e_2^j$}};
    
    \draw[loosely dotted] (6, -0.8) .. controls (5.9,-0.2) ..  (6, 0.3);
        \node (C1) at (5.7, 2.8) {$\bullet$};
          \node at (5.6,2.3) {$-2$};
          \node (W1) at (7, 2.8) {$\bullet$};
          \draw [dotted] (C1) -- (W1);
          \node at (6.9,2.3) {$-2$};
         \node (C2) at (7.8, 2.8) {$\bullet$};
         \draw [-] (W1) -- (C2);
         \node at (7.7,2.3) {$-2$};
         
    \node (F1) at (5.7, 4.3) {$\bullet$};
          \node at (5.6,4.8) {$-2$};
          \node (G1) at (7, 4.3) {$\bullet$};
          \draw [dotted] (F1) -- (G1);
          \node at (6.9,4.8) {$-2$};
         \node (F2) at (7.8, 4.3) {$\bullet$};
         \draw [-] (G1) -- (F2);
         \node at (7.7,4.8) {$-2$};

 \draw[loosely dotted] (6, 3) .. controls (5.9,3.5) ..  (6, 4.1);
       
    \node (B3) at (9, 0) {$\bullet$};
    \draw [-] (B2) -- (B3);
    \draw [-] (K2) -- (B3);
    \draw [-] (C2) -- (B3);
    \draw [-] (F2) -- (B3);
    \node at (10.5,-3) {$v_c=-e_1^1-e_1^2-\dots-e_1^r-f_1+e_2^1-e_3^1$};
    \draw [->,zigzag](8.6,-2.5) --  (9,-.4);
     \node (B4) at (10, 0) {$\bullet$};
     \draw [color=red] (10,0) ellipse (.15 and -.33);
     \draw [-] (B3) -- (B4);
     \node at (11.5,0) {$v_r=e_1^r+\dots+e^r_{|n_r|}$};

      \node (K4) at (10, -1) {$\bullet$};
     \draw [color=red] (10,-1) ellipse (.15 and -.33);
     \draw [-] (B3) -- (K4);
     \node at (11.1,-1) {$w'=f_1+f_2$};

    \draw[loosely dotted] (10, 0.4) .. controls (10.1,1.2) ..  (9.9, 1.9);
     
         \node (D2) at (9.8, 2.3) {$\bullet$};
         \draw [color=red] (9.8,2.3) ellipse (.15 and -.33);
           \node at (11.2,2.4) {$v_{2}=e_1^{2}+\dots+e^{2}_{|n_{2}|}$};
         \draw (D2) -- (B3);

         \node (L2) at (9.2, 3.2) {$\bullet$};
         \draw [color=red] (9.2,3.2) ellipse (.15 and -.33);
           \node at (10.6,3.5) {$v_{1}=e_1^{1}+\dots+e^{1}_{|n_{1}|}$};
         \draw (L2) -- (B3);        
   \end{tikzpicture}
\caption{Partial embedding of $\Gamma_1$ into the standard negative diagonal lattice with basis $\{e^1_1,\dots,e^r_{|n_r|},f_1,f_2\}$. The embedding of the remaining $-2$-chains uses only basis vectors of the form $e^1_*$.}
\label{f:partial1}
\end{figure}
%

We are left then with $q := t-(r-1)$ further $U^j$'s, the embeddings of which contain only vectors in $\iota(v_1)$, and such that the embedding of the initial vertex of each contains one of $\{e^1_1, e^1_2, e^1_3\}$. Appealing again to Observation \ref{ob_uniquely_used_pair}, we see that $1\leq q$, and by orthogonality $q\leq 3$. Moreover, as $d+t-r=2$, we see that $q=3-d$; as $d\geq1$ by assumption, we discard the possibility $q=3$.

Suppose then that $q=1$.  Note that in this case the associated graph of $\iota(v_1)$ has vertex set $U^j \cup \{v_c,v_1\}$ for some $j$, and by Observation \ref{ob_uniquely_used_pair}, $p_j=|n_1|=:n$. It follows that this graph $\Gamma$ has $n+1$ vertices, and gives a lattice embedding $(\Z^{n+1},Q_\Gamma)$ into $(\Z^{n},-\mathrm{Id})$. On the other hand, the absolute value of the weight of each vertex in $\Gamma$ is at least its valency, from which we conclude that the matrix $Q_\Gamma$ is diagonally dominant and thus has non-zero determinant, giving a contradiction to the `algebraic version' of Remark 3.1. 

We arrive at the same contradiction if we analyze the final case with $q=2$ and $d=1$: the associated graph of $\iota(v_1)$ has vertex set $U^{j_1}\cup U^{j_2} \cup \{v_c,v_1\}$ for some $j_1,j_2$. Call this three legged star-shaped graph $\Gamma$. Its vertex set is obtained from that of  $\Gamma_1$ by deleting the vertices $v_2,\dots,v_r,w'$ and all $u_i^j$ with $j\not\in\{j_1,j_2\}$. The rank of $\Gamma$ is then $p_{j_1}+p_{j_2}$. In the current set-up, Equation~\eqref{e:rank} combined with the fact that for $i=2,\dots,r$ we have $|n_i|=p_j$ for a unique $j$ depending on $i$ reads 
\begin{equation}
    \sum_{i=1}^r|n_i|+2=\sum_{i=1}^t p_i +d \Leftrightarrow |n_1|+2=p_{j_1}+p_{j_2}+1.
\end{equation}
It follows that the rank of $\Gamma$ is $|n_1|+1$. However, $Q_\Gamma$ is diagonally dominant and we have an embedding of $\Gamma$ into a standard negative lattice of rank $|n_1|$ which again contradicts the `algebraic version' of Remark~\ref{r:useful}.

To finish the proof we still need to discard the possibility of an embedding of the lattice $(\Gamma_2,Q_{\Gamma_2})$. This is much easier than the preceding case. In $\Gamma_2$ the central vertex has weight $d-r$, which, since $d\geq 1$, yields $|v_c\cdot v_c|<r$. It is immediate to check, and follows from the arguments above, that we need at least $r$ basis vectors to attach $r$ $-2$-chains of length greater than two. This necessitates a central vertex with $|v_c\cdot v_c|\geq r$. This contradiction finishes the proof.

\end{proof}

Now that we have established Proposition~\ref{p:2A} we proceed to analyze the case of Type 2B pretzel knots with unitary parameters. Recall from Gabai's classification, Theorem~\ref{t:Gabai}, that a Type 2B fibered pretzel knot has an equal number of positive and negative odd parameters. Continuing with our convention of using $d$ for the number of $1$s, $t$ for the number of odd parameters greater than one and $r$ for the number of negative odd parameters, we can write Gabai's condition as $d+t=r$. Furthermore, for a Type 2B pretzel knot to be fibered we need either $L'=\pm P(2,-2,\dots,2,-2,n)$, $n\in\Z$, which is impossible since in our case $t\neq r$; or $L'=\pm P(2,-2,\dots,2,-2,2,-4)$, which is also impossible since this link has an even number of parameters defining it, so $d$ is odd and therefore $t\neq r$, but, at the same time, $L'$ has the same number of $-2$ and $2$ entries, which forces $t=r$. We collect this observation in the next proposition.
\begin{prop}\label{p:2B}
There are no Type 2B fibered pretzel knots with unitary parameters.
\end{prop}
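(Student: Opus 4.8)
The plan is to show directly that the arithmetic constraints coming from Gabai's classification of Type 2B fibered pretzel knots (Theorem~\ref{t:Gabai}) are incompatible with the presence of a unitary parameter, so that the proposition holds vacuously. No lattice embedding argument is needed here; everything is bookkeeping about the number of parameters of each sign, and about the form of the auxiliary link $L'$.

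First I would set up notation consistent with the surrounding text: write $K$ as a Type~2 pretzel knot with $d\ge 1$ unitary parameters, $t$ positive odd non-unitary parameters, $r$ negative odd non-unitary parameters, and the single even parameter last. Recall from Section~\ref{s:background} that after removing cancelling $\pm1$ pairs we may assume all $d$ unitary parameters have the same sign, say all equal to $+1$ (the case of all $-1$ is handled by mirroring, which preserves fiberedness). In the construction of the auxiliary link $L'=P(-2a_{d+1},\dots,-2a_{n-1},2m)$, each odd non-unitary parameter contributes a $\pm2$ according to its sign, so $L'$ has exactly $t$ entries equal to $-2$, exactly $r$ entries equal to $+2$, plus the final entry $2m$.

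Next I would invoke Gabai's Type~2B hypothesis: the numbers of positive and negative \emph{odd} parameters of $K$ are equal, which since the $d$ unitary parameters are all positive means $d+t=r$; as $d\ge 1$ this forces $t\neq r$, indeed $t<r$. Now I run through the two cases in which a Type~2B knot fibers. In the first case $L'=\pm P(2,-2,\dots,2,-2,n)$ for some $n\in\Z$: the initial block has equal numbers of $+2$ and $-2$ entries, i.e.\ $t=r$, contradicting $t\neq r$. In the second case $L'=\pm P(2,-2,\dots,2,-2,2,-4)$: here the first $n-d-1$ entries again split into equally many $+2$'s and $-2$'s, so again $t=r$, a contradiction. (One should also note the final $-4$ entry is even and hence not among the $\pm2$'s, so it does not rescue the count.) Either way we reach a contradiction, so no Type~2B fibered pretzel knot can have a unitary parameter.

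The only real subtlety — and the one place I would be careful — is to make sure the normalizations are stated precisely: that removing $1,-1$ pairs and $1,-2$ (resp.\ $-1,2$) pairs by flyping, as described in Section~\ref{s:background}, lets us assume all unitary parameters share a sign, and that this reduction does not change fiberedness or the essential shape of $L'$. Once that is in place the argument is a two-line parity count, so I do not expect any genuine obstacle; the proposition is essentially an observation, exactly as the authors indicate.
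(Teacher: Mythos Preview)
Your argument is essentially the same as the paper's: both derive $d+t=r$ from the Type~2B hypothesis with $d\ge 1$ positive unitary parameters, and then check that each of Gabai's two possible forms for $L'$ forces $t=r$, giving the contradiction. The paper adds the observation that the second form $L'=\pm P(2,-2,\dots,2,-2,2,-4)$ has an even number of parameters (hence $d$ is odd) before reaching the same $t=r$ conclusion, but this is a minor variation on the identical bookkeeping argument you give.
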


Since the case of Type 2C pretzel knots reduces to Type 3, we are only left with the analysis of the latter family in the presence of unitary parameters. When looking at fiberedness combined with sliceness within Type 3 pretzel knots with unitary parameters, we do find some knots in the intersection. The arguments used to establish this fact are not completely dissimilar from those used to settle the case of Type 2 knots; however, the fine detail producing the examples is unique to the Type~3 knots. Since the arguments used involve Seifert spaces, for which the order and overall sign of the parameters do not matter, we will consider a `model' Type~3 pretzel knot to be of the form $K=P([1^d],p_1,\dots,p_t,n_1,\dots,n_r)$, where $d\geq 1$, $p_i\geq 2$, $n_i\leq -2$ and there is a unique even parameter. The associated Seifert space is $Y_K([1^d],p_1,\dots,p_t,n_1,\dots,n_r)$. In the next proposition we deal with Type 3A.
\begin{prop}\label{p:3A}
Let $K$ be a Type 3A fibered pretzel knot with unitary parameters. If $K$ is slice, then up to mutation:
$$
K=P(1,t+1,3,-4-t,q_1,\dots,q_r,-q_1,\dots,-q_r),
$$
with $q_i\geq 3$ odd, $r,t\geq 0$.
\end{prop}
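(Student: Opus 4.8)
The plan is to run the same lattice-embedding machinery used in Proposition~\ref{p:2A}, but now exploiting the Type~3A structure to extract the precise surviving family rather than a contradiction. First I would fix a model $K=P([1^d],p_1,\dots,p_t,n_1,\dots,n_r)$ with $p_i\geq 2$, $n_i\leq -2$, and exactly one even parameter, and record that Type~3A fiberedness forces the number of positive and negative parameters to differ by $2$, i.e.\ $d+t-r=\pm 2$; replacing $K$ by its mirror if necessary we may assume $d+t-r=2$. Next I would write down the two candidate negative-definite graphs $\Gamma_1,\Gamma_2$ as in Figure~\ref{f:2Aones} (central vertex of weight $\mp d$ with the $p_i$-chains blown up to $-2$-chains), together with the Wu class, and assume $K$ is slice so that one of these embeds into the standard negative lattice of equal rank, with $\iota(w)=\sum_i e_i$ by the discussion at the end of Section~\ref{s:lattice}. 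As in Proposition~\ref{p:2A}, a weight count on the central vertex using the Wu embedding should rule out $\Gamma_2$ outright and pin down $d+t-r=2$ together with $v_c\cdot v_c$ for $\Gamma_1$.

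The heart of the argument is the analysis of $\Gamma_1$. I would reuse Observation~\ref{ob_uniquely_used_pair} verbatim: any pair $e^k_l,e^k_{l+1}$ not meeting $\iota(U)$ can be deleted to produce a lower-rank embedding of a graph on the same vertex set, contradicting Remark~\ref{r:useful}. Together with the orthogonality constraints on the $-2$-chains, this should force, exactly as before, that for all but a bounded number of the $p_j$-chains $U^j$ the embedding uses precisely the basis vectors of some $\iota(v_k)$ with $|n_k|=p_j$ — so those parameters pair up as $\{q_1,-q_1,\dots\}$ — and that the leftover chains all feed into $\iota(v_1)$ (say), sharing only the three vectors $e^1_1,e^1_2,e^1_3$ used by $v_c$. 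Counting as in the end of Proposition~\ref{p:2A}, the number of leftover chains is $q=3-d$ (since $d+t-r=2$), so $q\in\{1,2\}$ because $d\geq 1$. The key difference from Type~2A is that there is no forced weight-$-2$ vertex $w'$ in the Wu set here, so the ``diagonally dominant'' contradiction no longer kills $q=1$ and $q=2$; instead these two cases survive and must be identified explicitly.

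For the surviving cases I would compute the associated graph of $\iota(v_1)$: when $q=1$ (so $d=2$) it is a two-legged star, i.e.\ a linear chain, and when $q=2$ (so $d=1$) it is a three-legged star-shaped graph. Using Observation~\ref{ob_uniquely_used_pair} to control which pairs $e^1_*$ are ``free'' and the length constraints $p_j\geq 2$, I expect to read off that the only star-shaped graphs compatible with an embedding into a lattice of one-lower rank (the defect being exactly $1$, forced by Remark~\ref{r:useful} together with the rank bookkeeping from Saveliev's formula~\eqref{e:sign}) correspond to the parameter triples giving $P(1,3,t+1,-4-t,\dots)$ — that is, a unitary parameter, a $3$, and a complementary pair $t+1,-4-t$ summing appropriately, with everything else paired. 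Concretely this should come down to the arithmetic identity from~\eqref{e:sign} relating $|n_1|$, the $p_j$'s and $d$, combined with the embedding forcing $p_{j_1}=3$ and $p_{j_1}+|\text{leg lengths}|$ to match $|n_1|$. The main obstacle I anticipate is precisely this last enumeration: unlike in Proposition~\ref{p:2A}, the cases $q=1,2$ are not contradictory, so one has to carefully classify all embeddings of the relevant small star-shaped lattices into a diagonal lattice of rank one less, keep track of which $p_j$ is forced to equal $3$, and check that the signs of the ``free choice'' basis vectors (the sign of $e^1_2$, etc.) do not produce extra families. I would organize this as a short finite case check on the initial-vertex embeddings $u^j_1\in\{e^1_1-\cdot,\ e^1_2-\cdot,\ e^1_3-\cdot\}$, propagating each down its chain via the $-2$ structure and closing off with Observation~\ref{ob_uniquely_used_pair}, which should leave exactly the stated family up to mutation and mirroring.
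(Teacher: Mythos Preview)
Your overall strategy is the same as the paper's, and your identification of the endgame --- reduce to a small star-shaped graph in the span of one $v_i$'s basis vectors, then classify embeddings --- matches what the paper ultimately does. However, there is a genuine gap in your reduction step.

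You assert that the leftover $-2$-chains ``all feed into $\iota(v_1)$'', i.e.\ that there is a \emph{single} special negative vertex $v_{i_0}$ (the one contributing the extra vectors $e^{i_0}_2,e^{i_0}_3$ to $v_c$) and that every other $v_k$ is exactly paired with some $U^j$ via $|n_k|=p_j$. In Type~2A this worked because all odd parameters were odd and the even one was segregated as $w'$. In Type~3A the unique even parameter sits among the $p_i,n_j$ themselves, and this breaks the pairing: if the even parameter is some $p_\ell$, then the chain $U^\ell$ may link to $v_k$ via $e^k_1$ with $k\neq i_0$ yet $|n_k|\neq p_\ell$ (parity prevents equality, and Observation~\ref{ob_uniquely_used_pair} only rules out $|n_k|\geq p_\ell+2$, leaving $|n_k|=p_\ell+1$ alive); if the even parameter is some $n_m$ with $m\neq i_0$, then $v_m$ cannot be exactly paired with any odd $p_j$. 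Either way a \emph{second} unpaired negative vertex can appear, so your count $q=3-d$ and the restriction $q\in\{1,2\}$ are not justified, and the configurations corresponding to larger $d$ are not addressed.

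The paper handles this by first using a rational homology cobordism (\cite[Remark~6.5]{pretzel}) to strip off all the genuinely paired $(p_j,n_k)$'s, landing on a small $K'$ with exactly \emph{two} negative parameters $n_{i_0},n_r$ and $t'=4-d\in\{0,1,2,3\}$ positive chains. The cases $t'=0,1$ are then eliminated directly (via Equation~\eqref{e:sign} and an inequality), while in $t'=2,3$ one of the chains is shown to pair with $v_r$ after all, allowing a second cobordism down to the single-negative picture you describe. Only then does the paper carry out the small classification --- and for the resulting linear (lens space) graph in the $t'=2$ case it invokes Lisca's classification rather than a bare hands check to extract $|n_{i_0}|=4$, $p=3$. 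Your $q=1,2$ analysis corresponds to this final stage, but you have skipped the justification for getting there.
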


\begin{proof}
    As in the Type 2A case, we have two possible negative definite graphs, $\Gamma_1$ and $\Gamma_2$, associated with these knots. These are depicted in Figure~\ref{f:3Aones} together with the Wu set. The essential difference between Type 3A and Type 2A graphs is that in the former there is one even parameter among the $p_i$ and $n_j$, while in the latter the unique even parameter had weight $\pm2$ and was considered independently of the other parameters.
\begin{figure}
  \begin{tikzpicture}[xscale=1.3,yscale=-0.6]
    \node (A0_4) at (4.2, 3.8) {$n_r$};
    \node (A1_4) at (4, 4) {$\bullet$};
    \node (A2_2) at (1.7, 3.8) {$p_1$};
    \node (A2_2) at (1.7, 5) {$p_t$};
    \node (A2_3) at (3, 3.4) {$-d$};
    \node (A3_1) at (2, 5) {$\bullet$};
    \node (A3_2) at (2, 4) {$\bullet$};
    \node (A3_3) at (3, 4) {$\bullet$};
    \node (B) at (3.2,5.5) {$\bullet$};
    \node at (3.2,5.9) {$n_1$};
    \node at (4.4, 5.1) {$n_{r-1}$};
    \node (A4) at (4, 5) {$\bullet$};
     \path (A3_3) edge [-] node [auto] {$\scriptstyle{}$} (A4);
     \path (A3_3) edge [-] node [auto] {$\scriptstyle{}$} (A3_1);
     \path (A3_3) edge [-] node [auto] {$\scriptstyle{}$} (A1_4);
     \path (A3_3) edge [-] node [auto] {$\scriptstyle{}$} (A3_2);
     \path (A3_3) edge [-] node [auto] {$\scriptstyle{}$} (B);
     \draw [dotted] (A3_2) -- (A3_1);
     \draw [dotted] (B) .. controls (3.6,5.5) .. (A4);
     
    \path[draw,thin,->] 
    (4.3,2.6) -- ++(0,-1.2cm)  -- (5.4,1.4);
    \node at (5.8,1.4) {$\Gamma_1:$};
    \node at (4.6,1) {\small{$e(Y_K)<0$}};
    
    \path[draw,thin,->] 
    (4.3,6) -- ++(0,1cm)  -- (5.4,7);
    \node at (5.8,7) {$\Gamma_2:$};
    \node at (4.6,7.5) {\small{$e(Y_K)>0$}};
   
          \node (B1) at (6.8, 0) {$\bullet$};
          \node at (6.8,-0.4) {$-2$};
         \node (B2) at (8.1, 0) {$\bullet$};
         \draw [dotted] (B1) -- (B2);
         \node at (8.1,-0.4) {$-2$};
         \draw[decorate,decoration={brace,amplitude=5pt,raise=.5cm},yshift=0pt] (6.7, 0) -- (8.2, 0) node [midway,yshift=.9cm]{$p_{1}\!-\!1$};
    \node (B3) at (9, 0) {$\bullet$};
    \draw [-] (B2) -- (B3);
    \node at (9,-0.4) {$-d\!-\!t$};
     \node (B4) at (10, 0) {$\bullet$};
     \draw [color=red] (10,0) ellipse (.15 and -.33);
     \draw [-] (B3) -- (B4);
     \node at (10.3,-0.4) {$n_r$};

      \node (C1) at (7, 3) {$\bullet$};
          \node at (6.9,2.6) {$-2$};
         \node (C2) at (8.2, 2) {$\bullet$};
         \draw [dotted] (C1) -- (C2);
         \node at (7.9,1.8) {$-2$};
          \draw[loosely dotted] (6.8, 0.4) .. controls (6.7,1.4) ..  (7, 2.3);
         \draw[decorate,decoration={brace,amplitude=5pt,raise=.2cm,mirror},yshift=0pt] (7, 3) -- (8.2, 2) node [midway,sloped,rotate=60,below,yshift=-.4cm]{$p_{t}\!-\!1$};
         \draw [-] (C2) -- (B3);
        \node (D1) at (9.3, 2.2) {$\bullet$};
        \draw [color=red] (9.3,2.2) ellipse (.15 and -.33);
          \node at (9.7,2.4) {$n_1$};
           \draw  (D1) -- (B3);
         \node (D2) at (10, 1) {$\bullet$};
         \draw [color=red] (10,1) ellipse (.15 and -.33);
           \node at (10.5,1.1) {$n_{r-1}$};
        \draw[loosely dotted] (9.5, 2.1) .. controls (9.7,2.1) ..  (9.9, 1.3);
         \draw (D2) -- (B3);
        \node (E1) at (6.8, 6) {$\bullet$};
        \draw [color=red] (6.8,6) ellipse (.15 and -.33);
        \node at (6.4, 5.7) {$-p_1$};
        \node (E2) at (7.8, 6) {$\bullet$};
        \node at (7.8, 5.6) {$d\!-\!r$};
        \draw [-] (E1) -- (E2);
        \node (E3) at (8.8, 6) {$\bullet$};
        \node at (8.8, 5.6) {$-2$};
        \node (X1) at (9.8,6) {$\bullet$};
        \node at (9.8, 5.6) {$-2$};
        \draw [dotted] (E3) -- (X1);
        \draw[decorate,decoration={brace,amplitude=5pt,raise=.55cm},yshift=0pt] (8.8, 6) -- (9.9, 6) node [midway,above,yshift=.7cm]{\tiny{$|n_{r}|\!-\!1$}};
        \draw [-] (E2) -- (E3);
        \node (E4) at (6.9, 7.5) {$\bullet$};
        \draw [color=red] (6.9,7.5) ellipse (.15 and -.33);
        \draw [-] (E4) -- (E2);
        \draw[loosely dotted] (6.7, 6.4) .. controls (6.7,6.8) ..  (6.8, 7.2);
        \node at (6.5, 7.6) {$-p_t$};
        \node (F1) at (8.8, 7) {$\bullet$};
        \node at (8.8, 7.6) {$-2$};
        \node (F2) at (10, 7.9) {$\bullet$};
        \node at (10, 8.4) {$-2$};
        \draw [dotted] (F1) -- (F2);
         \draw [-] (F1) -- (E2);
         \draw[decorate,decoration={brace,amplitude=5pt,raise=.2cm},yshift=0pt] (8.8, 7) -- (10, 7.9) node [midway,sloped,rotate=-54,above,yshift=.3cm]{\tiny{$|n_{r-1}|\!-\!1$}};

         \node (D1) at (7.6, 8) {$\bullet$};
         \node at (7.9, 7.8) {$-2$};
         \node (D2) at (8.8, 9) {$\bullet$};
        \node at (9.1, 8.9) {$-2$};
        \draw [dotted] (D1) -- (D2);
         \draw [-] (D1) -- (E2);
         \draw[decorate,decoration={brace,amplitude=5pt,raise=.2cm,mirror},yshift=0pt] (7.6, 8) -- (8.8, 9) node [midway,sloped,rotate=-62,above,yshift=-.9cm]{\tiny{$|n_{1}|\!-\!1$}};

         \draw[loosely dotted] (7.8, 7) .. controls (8,7.1) ..  (8.3, 6.5);
   \end{tikzpicture}
\caption{Two possible negative definite graphs associated to a Type 3A pretzel knot $K$, depending on the value of $e(Y_K)$. The Wu class in each case is depicted in red, and independent of which is the even parameter.}
\label{f:3Aones}
\end{figure}
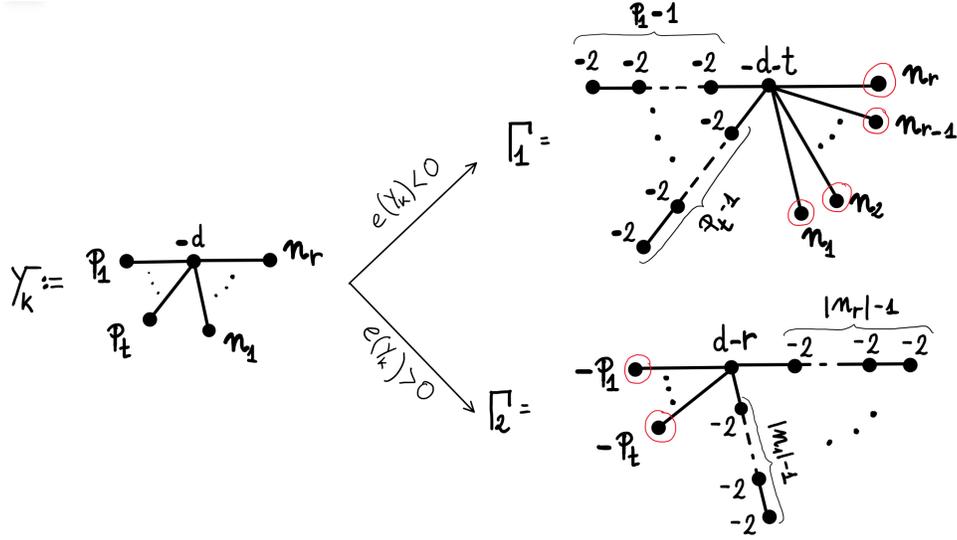
%

%

The case of $\Gamma_2$ is immediate to exclude: there cannot be an embedding of this graph since the linking of the $r$ $-2$-chains, of which there is at most one shortest one of length 1, necessitates a central vertex of weight less or equal than $-r$; however, since $d\geq 1$ we do not have this condition.

The case of $\Gamma_1$ has many similarities with the analogous study in the Type 2A case. The fiberdness condition, $d+t-r=\pm2$, coupled with the linking of the Wu set to the central vertex, $d+t\geq r$, forces $d+t=r+2$. We obtain a partial embedding of $\Gamma_1$ as depicted in Figure~\ref{f:partial3A}. Note the special role played by the basis vectors $e^{i_0}_2$ and $e^{i_0}_3$.
\begin{figure}
   \begin{tikzpicture}[xscale=1.3,yscale=-0.6]
         \node at (6, -.2) {$U^1:$};
        \node (B1) at (6.8, 0) {$\bullet$};
          \node at (6.8,-0.4) {$-2$};
         \node (B2) at (8.1, 0) {$\bullet$};
         \draw [dotted] (B1) -- (B2);
         \node at (8.1,-0.4) {$-2$};
         
    \node (B3) at (9, 0) {$\bullet$};
    \draw [-] (B2) -- (B3);
    \node at (10.5,-2) {$v_c=-e_1^1-e_1^2-\dots-e_1^r+e^{i_0}_2-e_3^{i_0}$};
    \draw [->,zigzag](8.8,-1.5) --  (9,-.6);
     \node (B4) at (10, 0) {$\bullet$};
     \draw [color=red] (10,0) ellipse (.15 and -.33);
     \draw [-] (B3) -- (B4);
     \node at (11.5,0) {$v_r=e_1^r+\dots+e^r_{|n_r|}$};

     \node at (6, 2.8) {$U^t:$};
      \node (C1) at (7, 3) {$\bullet$};
          \node at (6.9,2.6) {$-2$};
         \node (C2) at (8.2, 2) {$\bullet$};
         \draw [dotted] (C1) -- (C2);
         \node at (7.9,1.8) {$-2$};
          \draw[loosely dotted] (6.8, 0.4) .. controls (6.7,1.4) ..  (7, 2.3);
         \draw [-] (C2) -- (B3);
        \node (D1) at (9.3, 2.2) {$\bullet$};
        \draw [color=red] (9.3,2.2) ellipse (.15 and -.33);
          \node at (10.7,2.7) {$v_1=e_1^1+\dots+e^1_{|n_1|}$};
           \draw  (D1) -- (B3);
         \node (D2) at (10, 1) {$\bullet$};
         \draw [color=red] (10,1) ellipse (.15 and -.33);
           \node at (11.9,1.1) {$v_{r-1}=e_1^{r-1}+\dots+e^{r-1}_{|n_{r-1}|}$};
        \draw[loosely dotted] (9.5, 2.1) .. controls (9.7,2.1) ..  (9.9, 1.3);
         \draw (D2) -- (B3);
   \end{tikzpicture}
    \caption{Partial embedding of graph $\Gamma_1$ in a lattice of rank $\sum_{i=1}^r|n_i|$ in the Type 3A family.}
    \label{f:partial3A}
\end{figure}
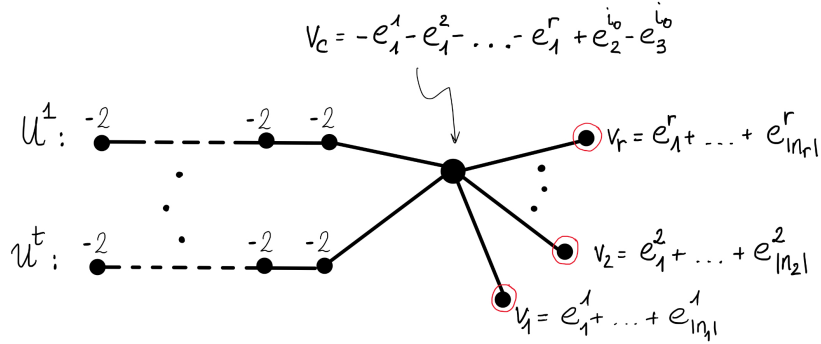

%
Moreover, with the same notation and using the arguments spelled out in the proof of Proposition~\ref{p:2A}, in the current scenario one can show:
        \begin{itemize}
            \item If $\iota(u_1^i)$  is linked to the central vertex via $e^j_k$, then in $\iota(U^i)$ we find only vectors of the form $e^j_*$.
            \item If moreover $n_j$ and $p_i$ are both odd and $j\neq i_0$, then $|n_j|=p_i$ and the $e^j_*$ basis vectors appear only in $v_j$, $v_c$ and $U^i$.
        \end{itemize}
Notice that, if the even parameter is negative, the situation described in the second bullet point applies to all $n_i\neq n_{i_0}$ except for perhaps one exception. Without loss of generality we may assume it applies to all $n_i$ with $i\not\in\{i_0,r\}$; if the even parameter is positive, we may assume it is $p_1$ and, just as before, we have at most one $n_i$, $i\neq i_0$, not falling into the second bullet point. Once again, without loss of generality, we decide this is $n_r$. Summing up, except perhaps for the indices $\{i_0,r\}$, each $n_i$ has, through the embedding, a corresponding positive parameter of the same absolute value. There is a rational homology cobordism (see \cite[Remark~6.5]{pretzel}) between $Y_K([1^d],p_1,\dots,p_t,n_1,\dots,n_r)$ and $Y_{K'}([1^d],p_1,\dots,p_{t'},n_{i_0},n_r)$, where the positive parameters might have been re-indexed and $t'\leq t$. It follows that in order to show that $\Gamma_1$ does not embed, it suffices to prove the same statement for $\Gamma'_1$, which is depicted with partial embedding in Figure~\ref{f:partialbis}.
\begin{figure}[h]
   \begin{tikzpicture}[xscale=1.3,yscale=-0.6]
         \node at (5.7, -.2) {$U^1:$};
        \node (B1) at (6.5, 0) {$\bullet$};
          \node at (6.5,-0.4) {$-2$};
         \node (B2) at (7.8, 0) {$\bullet$};
         \draw [dotted] (B1) -- (B2);
         \node at (7.8,-0.4) {$-2$};
         
    \node (B3) at (9, 0) {$\bullet$};
    \draw [-] (B2) -- (B3);
    \node at (10,-2) {$v_c=-e_1^r-e_1^{i_0}+e^{i_0}_2-e_3^{i_0}$};
    \draw [->,zigzag](8.8,-1.5) --  (9,-.6);
     \node (B4) at (10, 0) {$\bullet$};
     \draw [color=red] (10,0) ellipse (.15 and -.33);
     \draw [-] (B3) -- (B4);
     \node at (11.5,0) {$v_r=e_1^r+\dots+e^r_{|n_r|}$};

     \node at (5.7, 2.8) {$U^{t'}:$};
      \node (C1) at (6.5, 3) {$\bullet$};
          \node at (6.5,2.6) {$-2$};
         \node (C2) at (7.9, 3) {$\bullet$};
         \draw [dotted] (C1) -- (C2);
         \node at (7.8,2.6) {$-2$};
         \draw [-] (C2) -- (B3);

         \node at (5.7, 0.8) {$U^2:$};
      \node (K1) at (6.5, 1) {$\bullet$};
          \node at (6.5,0.6) {$-2$};
         \node (K2) at (7.9, 1) {$\bullet$};
         \draw [dotted] (K1) -- (K2);
         \node at (7.8,0.6) {$-2$};
         \draw [-] (K2) -- (B3);

    \draw[loosely dotted] (7, 1.2) .. controls (6.8,1.9) ..  (7, 2.6);
     
         \node (D2) at (10, 1) {$\bullet$};
         \draw [color=red] (10,1) ellipse (.15 and -.33);
           \node at (11.6,1.1) {$v_{i_0}=e_1^{i_0}+\dots+e^{i_0}_{|n_{i_0}|}$};
         \draw (D2) -- (B3);
   \end{tikzpicture}
    \caption{Partial embedding of graph $\Gamma'_1$ in a lattice of rank $|n_{i_0}|+|n_r|$. While a priori $0\leq t'\leq t$, we establish in the proof that $t'\in\{1,2,3\}$.}
    \label{f:partialbis}
\end{figure}
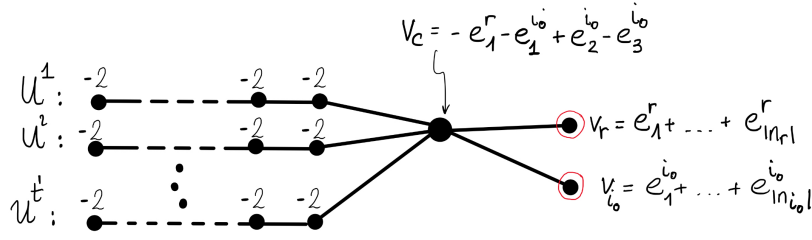
%
The embedding of $\Gamma'_1$ implies that $t'\leq 4$: there is at most one $-2$-chain of shortest length one and therefore each chain has to be linked to the central vertex with a different basis vector. Moreover, since $K$ and $K'$ only differ by a certain number of paired up parameters, for $K'$ we must have $0<d=2+2-t'$ which forces $t'\in\{0,1,2,3\}$. Furthermore, since $K$ and $K'$ are cobordant up to mutation, their signatures coincide, and applying Equation~\eqref{e:sign} to $K'$ and $\Gamma'_1$ we obtain 
    \begin{equation}\label{e:Gammaprime}
          |n_{i_0}|+|n_r|=3+\sum_{j=1}^{t'} (p_j - 1).
    \end{equation}
This last expression yields an immediate contradiction in the case $t'=0$, since we have  $|n_{i_0}|+|n_r|>4$. We finish the proof studying separately the three possible values of $t'$.
  \begin{itemize}
      \item $t'=1$. In this case there is only one $-2$-chain whose embedding will consist solely of basis vectors of the form $e^r_*$ or $e^{i_0}_*$. The chain has length $p_1-1$ and its embedding necessitates $p_1$ basis vectors, while Equation~\eqref{e:Gammaprime} implies that $p_1 = |n_{i_0}|+|n_r|-2$. If these vectors were all of the form $e^r_*$ we have
      $$
      |n_r|\geq |n_{i_0}|+|n_r|-2\Rightarrow|n_{i_0}|\leq2,
      $$
      and this cannot happen, since we have at least 3 basis vectors in the embedding of $v_{i_0}$.  Suppose then that they are all of form $e^{i_0}_*$. By orthogonality $u^1_1$ has one basis vector in common with $v_c$, and similarly $u^1_2$ has either both or neither of its basis vectors in $E:=\{e_1^{i_0},e^{i_0}_2,e^{i_0}_3\}$. If neither, then the same is true of each $u^1_k$, $k>1$, so $p_1 \leq |n_{i_0}| - 2$. If both, then again the same is true of each $u_k$, but then as there are only three basis vectors available the chain has length at most two, and uses only two vectors from $E$. In either case, $|n_{i_0}| > p_1$, so 
      $$
      |n_{i_0}|-1\geq |n_{i_0}|+|n_r|-2\Rightarrow|n_r|\leq 1,
      $$
      which again cannot happen since $|n_r|\geq 2$. 
      \item $t'=2$. We now have two $-2$-chains which we need to embed. 
      Using Observation \ref{ob_uniquely_used_pair}, we need to use at least one $e^r_*$, and thus have one $-2$-chain $U^1$ embedded with $e^r_*$ vectors and the other $U^2$ with $e^{i_0}_*$ vectors. By Equation~\eqref{e:Gammaprime} we have that there is a total of $|n_{i_0}|+|n_r|-3$ weight $-2$ vertices and, since there are two chains, we need precisely $|n_{i_0}|+|n_r|-1$ vectors in their embedding. Since, as remarked in the previous case, $U^2$ cannot use all of $U^{i_0}_*$, it must use all but one of these, while $U^1$ uses all of the $e^r_*$. We may again then invoke the cobordism argument to delete $U^1$ and $v_r$, and remove $e^r_1$ from the embedding of $v_c$. We are thus left with embedding the graph $\Gamma$ depicted in Figure~\ref{f:lens} with the partial embedding shown. 

\begin{figure}[h]
   \begin{tikzpicture}[xscale=1.3,yscale=-0.6]
        \node (B1) at (6.8, 0) {$\bullet$};
          \node at (6.8,-0.5) {$-2$};
         \node (B2) at (8.1, 0) {$\bullet$};
         \draw [dotted] (B1) -- (B2);
         \node at (8.1,-0.5) {$-2$};
          \draw[decorate,decoration={brace,amplitude=5pt,raise=.5cm},yshift=0pt] (6.7, 0) -- (8.2, 0) node [midway,yshift=.9cm]{\small{$|n_{i_0}|\!-\!2$}};
         
    \node (B3) at (9, 0) {$\bullet$};
    \node at (9,-0.5) {$-3$};
    \draw [-] (B2) -- (B3);
    \node at (9.5,1.5) {$v_c=-e_1^{i_0}+e_2^{i_0}-e_3^{i_0}$};
    \draw [->,zigzag](8.7,1.2) --  (9,0.3);
     \node (B4) at (10, 0) {$\bullet$};
     \node at (10,-0.5) {$n_{i_0}$};
     \draw [-] (B3) -- (B4);
     \node at (11.5,0.5) {$v_{i_0}=e_1^{i_0}+\dots+e^{i_0}_{|n_{i_0}|}$};
\end{tikzpicture}
\caption{Partial embedding of graph $\Gamma$ in a lattice of rank $|n_{i_0}|$ with one $-2$-chain of length $|n_{i_0}|-2$.}
\label{f:lens}
\end{figure}
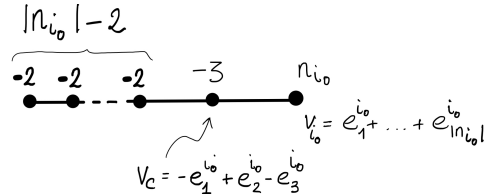
      
%
      The 3-manifold associated to this graph is a lens space. While it is simple to determine directly, we can appeal to Lisca's classification \cite[Lemma~7.3]{lisca} to conclude that $\Gamma$ will embed if and only if $|n_{i_0}|=4$ and the length of the $-2$-chain is 2, that is, for the `pretzel knot' $P(1,1,3,-4)$, which is a 2-bridge ribbon knot. For the sake of completeness, in Figure~\ref{f:band3A1} (with $t=0$) we show the ribbon move, yielding two unlinked unknots, on this pretzel knot (with its standard pretzel projection). 
          %
\begin{figure}
        \centering
        \includegraphics[width=0.40\textwidth]{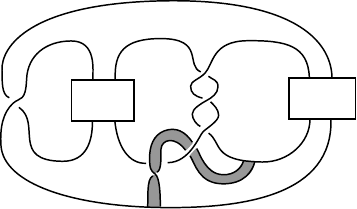}
        \begin{overpic}[width=\linewidth,height=0.001in]{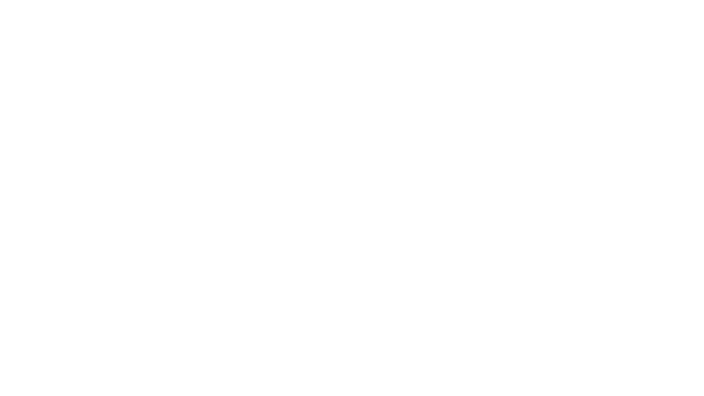}
        
        \put(39,15) {\small $t+1$}
        
        \put(62.8,15) {\small $-4\!-\!t$}
        \end{overpic}
        \caption{
            The family of ribbon pretzel knots $P(1,t+1,3,-4-t)$ and a band which describes the ribbon disks. Notice that, when $t=0$, this is a 2-bridge knot.
        } 
        \label{f:band3A1}
    \end{figure}

      Note then that by adding arbitrary paired parameters discarded in the cobordism argument, we obtain $P(1,1,3,-4,q_1,...,q_r,-q_1,...,-q_r)$,  with $q_i\geq 3$ odd, $r \geq 0$. That ribbonness is preserved when the paired up parameters are adjacent follows immediately from the cobordism determined by the evident ribbon move (See Figure~\ref{f:pretzel}).        \item $t'=3$. In this last subcase we have three $-2$-chains and again by Equation~\eqref{e:Gammaprime} a total of $|n_{i_0}|+|n_r|-3$ vertices of weight $-2$, now needing $|n_{i_0}|+|n_r|$ basis vectors in their embedding. Arguing as in the preceding point, the $e^r_*$ must all appear in one $U^j$ chain and thus our cobordism argument allows us to again reduce to the case we are dealing with a graph $\Gamma$ just like the one in Figure~\ref{f:lens}, but with two $-2$-chains attached to the central vertex instead of just one. Just as before, all basis vectors need to be used in the embedding of these two $-2$-chains. 

       Again following the arguments of the previous case, we see that each of $u_1^1$ and $u_1^2$ has one basis vector in common with those of $v_c$ and that, for each $k$, either each vertex in $\{u^k_l \ | \ l>1\}$ has no basis vectors in common with $v_c$, or each vertex has both its basis vectors in common with $v_c$. As we must use all of the vectors of $v_c$ somewhere, we conclude that one chain satisfies the latter condition, so is length two, with embedding up to relabelling $(e_1^{i_0}-e_4^{i_0},-e_1^{i_0}+e_3^{i_0})$ (notice the symmetric role played by $e_1^{i_0}$ and $e_3^{i_0}$). The other chain, of arbitrary length, uses all the rest of the basis vectors in the embedding of $v_{i_0}$, so up to relabelling is $(-e_2^{i_0}+e_5^{i_0},-e_5^{i_0}+e_6^{i_0},...)$.  The knot associated to this graph is now $P(1,t+1,3,-4-t)$ with $t\geq 1$, ribbonness of which again is demonstrated by the same band move as in the previous case (Figure~\ref{f:band3A1}). Just as in the preceding point, we may add arbitrary pairs of parameters to arrive at the family $P(1,t+1,3,-4-t,q_1,\dots,q_r,-q_1,\dots,-q_r)$, with $q_i\geq 3$ odd, $r \geq 0, t\geq 1$. Again the obvious ribbon cobordism will reduce the paired parameters when these are adjacent.
  \end{itemize}
\end{proof}

Following the strategy laid out for this article, we now proceed to study the case of fibered Type 3B pretzel knots with unitary parameters which, up to an overall change of sign, we can assume to be positive. If such a knot is fibered, the number of positive and negative parameters coincide. The auxiliary link in Gabai's Theorem~\ref{t:Gabai} has a difference of one between the number of positive and negative entries. It follows that a Type 3B fibered pretzel knot with positive unitary entries has a unique parameter of value 1. Moreover, a cyclic permutation of the rest of the parameters, of which we have an odd number, has them alternating in sign, with the last parameter negative. Some examples of these fibered knots are $P(1,5,-3,-4)$ and $P(1,-3,-7,4,-5,5)=P(1,4,-5,5,-3,-7)$. 

Recall we are assuming that our pretzel knots do not have both parameters $\pm1$ and $\mp 2$ since, as explained in Section~\ref{s:background}, these knots admit simpler pretzel knot descriptions. This fact will be key in the proof of the next proposition. Once again, since we will be dealing with Seifert spaces in the arguments, for which the order of the parameters in the pretzel knot do not matter, we will consider for Type~3B fibered pretzel knots $K$ the Seifert spaces $Y_K(1,p_1,\dots,p_t,n_1,\dots,n_{t+1})$ where $p_i\geq 2$, $n_i\leq -3$ and there is a unique even parameter.
\begin{prop}\label{p:3B}
    Type 3B fibered pretzel knots with $\pm 1$ parameters are not slice.
\end{prop}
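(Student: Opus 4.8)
The plan is to follow the template of Propositions~\ref{p:2A} and~\ref{p:3A}, taking advantage of the fact that for Type~3B knots the relevant count is \emph{exactly} tight, which forces the embedding to be even more rigid than in those cases. Recall from the discussion preceding the statement that such a $K$ has a single unitary parameter (so $d=1$, normalized to $+1$), and that after discarding the forbidden $\{1,-2\}$ pair its model Seifert space is $Y_K(1,p_1,\dots,p_t,n_1,\dots,n_{t+1})$ with $p_i\ge 2$, $n_j\le -3$, a unique even parameter, and the balance $d+t=r$, i.e.\ $t+1=r$, forced by Gabai's fiberedness criterion (Theorem~\ref{t:Gabai}). Assuming $K$ slice, $Y_K$ bounds a rational homology ball, so one of the two negative definite graphs $\Gamma_1$ (when $e(Y_K)<0$) or $\Gamma_2$ (when $e(Y_K)>0$) embeds into the standard negative lattice of the same rank, and I would derive a contradiction in each case.

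First I would dispose of $\Gamma_2$, whose central vertex $v_c$ has weight $d-r=-t$: the $t+1$ negative parameters give $t+1$ pairwise disjoint $-2$-chains of length $|n_j|-1\ge 2$, and the initial vertex of each must share a basis vector with $v_c$, necessarily a \emph{distinct} one, since two chains using the same vector $e_s$ would force $\iota(u_1)\cdot\iota(u_1')=\mp 2\ne 0$, contradicting non-adjacency. Hence $|v_c\cdot v_c|\ge t+1>t$, a contradiction; this is the Type~3B analogue of the quick $\Gamma_2$ argument in Proposition~\ref{p:3A}. The substance lies in $\Gamma_1$, whose central vertex has weight $-(d+t)=-(t+1)=-r$, which \emph{exactly} matches the number of negative leaves. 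Running the arguments of Proposition~\ref{p:2A} — the embedding of the Wu class as $\sum_i e_i$, the impossibility of a $-2$-chain carrying a private basis pair (Observation~\ref{ob_uniquely_used_pair}), and the usual orthogonality bookkeeping — the tight count forces each of the $r$ basis vectors of $v_c$ to be the unique link to one leaf $v_{n_j}$, and forces each $-2$-chain $U^i$ coming from $p_i$ to be absorbed into the basis vectors of a distinct leaf $v_{n_{j(i)}}$ with $p_i=|n_{j(i)}|$. As there are only $t$ chains and $t+1$ leaves, exactly one leaf $v_{n_{j_0}}$ carries no chain; and since the unique even parameter cannot be matched (its partner would have the same absolute value, hence be a second even parameter), it must be this $n_{j_0}$, so by the forbidden-pair hypothesis $|n_{j_0}|\ge 4$.

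I would then invoke the rational homology cobordism of \cite[Remark~6.5]{pretzel} to strip all $t$ matched pairs $(p_i,-p_i)$, so that $Y_K$ is $\mathbb{Q}$-cobordant to the double branched cover of $K''=P(1,n_{j_0})$, whose canonical negative definite graph is the two–vertex graph $[\,-1\,]-[\,n_{j_0}\,]$ with boundary the lens space $L(|n_{j_0}|-1,1)$. Since $|n_{j_0}|-1\ge 3$ is odd, this is the double branched cover of the non-trivial torus knot $K''=\pm T(2,|n_{j_0}|-1)$, so by Lisca's proof of the slice–ribbon conjecture for $2$–bridge knots (equivalently, his classification of lens spaces bounding rational homology balls) \cite[Lemma~7.3]{lisca}, $L(|n_{j_0}|-1,1)$ does not bound a rational homology ball — contradicting that $Y_K$, hence its $\mathbb{Q}$–cobordant partner, does. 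I would point out here that, unlike the generic situation, the bare lattice–embedding obstruction applied to the reduced graph is \emph{not} sufficient: $[\,-1\,]-[\,n_{j_0}\,]$ does embed into $(\Z^2,-\mathrm{Id})$ whenever $|n_{j_0}|-1$ is a perfect square (take $\iota(v_c)=e_1$, $\iota(v_{n_{j_0}})=-e_1+be_2$ with $b^2=|n_{j_0}|-1$), so Lisca's finer, manifold–level result is genuinely needed — just as in the lens-space endgame of Proposition~\ref{p:3A}.

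The hard part, as in the earlier propositions of this section, is not the lens-space endgame but making the chain-to-leaf absorption rigorous in $\Gamma_1$: one must verify that the tight identity $|v_c\cdot v_c|=r$ really does preclude the "slack" configuration — a distinguished leaf absorbing several chains — that produced the infinite families of Proposition~\ref{p:3A}, rule out $\iota(v_c)$ having a coefficient of absolute value greater than $1$, and control the possibility that two distinct leaves share basis vectors in a way that spoils the clean pairing, essentially reproducing in this tighter setting the casework carried out there for $t'\in\{1,2,3\}$. A secondary technical point is the edge case $p_i=2$, whose chain has length one and whose pendant $-2$ vertex interacts with the Wu class differently from longer chains; this is also where the standing hypothesis that $\{1,-2\}$ is not among the parameters must be used, to guarantee $n_{j_0}\ne -2$ and hence that the reduced knot is genuinely non-slice.
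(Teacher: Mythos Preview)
Your proposal reaches the right conclusion and is broadly repairable, but it takes a substantial detour that the paper avoids entirely. The treatment of $\Gamma_2$ matches the paper. For $\Gamma_1$, both you and the paper arrive at the same partial picture: the Wu set embeds as $v_i=e^i_1+\cdots+e^i_{|n_i|}$, the central vertex as $v_c=-e^1_1-\cdots-e^{t+1}_1$, and each $-2$-chain $U^j$ is absorbed into the $e^{k(j)}_*$ vectors of a \emph{distinct} leaf, giving $p_j\le |n_{k(j)}|$ with exactly one index $k$ left unmatched. At this point the paper simply reads off the signature equation,
\[
\sum_{i=1}^{t+1}|n_i| \;=\; 2+\sum_{j=1}^{t}p_j,
\]
and combines it with $\sum_j p_j\le \sum_{i\ne k}|n_i|$ to obtain $|n_k|\le 2$, contradicting $n_i\le -3$. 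That is the entire endgame.

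Your plan instead aims to upgrade the inequality to an exact matching $p_i=|n_{j(i)}|$, deduce that the unmatched leaf carries the unique even parameter, strip the matched pairs by a rational homology cobordism, and then invoke Lisca on the residual lens space $L(|n_{j_0}|-1,1)$. None of this is needed: the contradiction is already present at the inequality stage, so your assertion that ``Lisca's finer, manifold-level result is genuinely needed'' is simply false here. Moreover, the step you correctly flag as ``the hard part'' --- establishing exact equality for every matched pair, including the one involving the even parameter where a parity-one gap is a priori possible --- is precisely the step the paper's argument renders unnecessary. In short: your route can be made to work, but the paper's use of the signature identity short-circuits all of the cobordism and lens-space machinery you propose.
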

\begin{proof}
    We start by establishing the two possible graphs, $\Gamma_1$ and $\Gamma_2$ associated with $Y_K$, the Seifert spaces described in the paragraph preceding this proposition. They are depicted in Figure~\ref{f:3Bones} together with the Wu set. Notice that the Wu set and the graphs are independent of which amongst the $p_i$ and $n_j$ is the even parameter. 

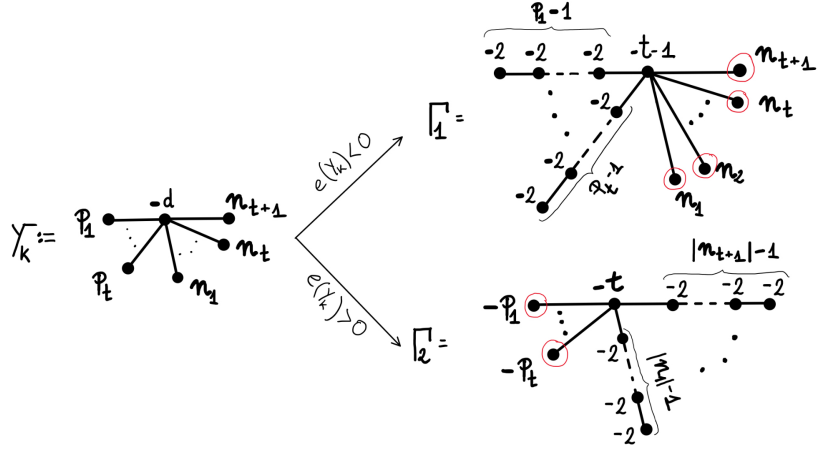
\begin{figure}
  \begin{tikzpicture}[xscale=1.3,yscale=-0.6]
    \node (A0_4) at (4.2, 3.7) {$n_{t+1}$};
    \node (A1_4) at (4, 4) {$\bullet$};
    \node (A2_2) at (1.7, 3.8) {$p_1$};
    \node (A2_2) at (1.7, 5) {$p_t$};
    \node (A2_3) at (3, 3.4) {$-d$};
    \node (A3_1) at (2, 5) {$\bullet$};
    \node (A3_2) at (2, 4) {$\bullet$};
    \node (A3_3) at (3, 4) {$\bullet$};
    \node (B) at (3.2,5.5) {$\bullet$};
    \node at (3.2,5.9) {$n_1$};
    \node at (4.4, 5.1) {$n_{t}$};
    \node (A4) at (4, 5) {$\bullet$};
     \path (A3_3) edge [-] node [auto] {$\scriptstyle{}$} (A4);
     \path (A3_3) edge [-] node [auto] {$\scriptstyle{}$} (A3_1);
     \path (A3_3) edge [-] node [auto] {$\scriptstyle{}$} (A1_4);
     \path (A3_3) edge [-] node [auto] {$\scriptstyle{}$} (A3_2);
     \path (A3_3) edge [-] node [auto] {$\scriptstyle{}$} (B);
     \draw [dotted] (A3_2) -- (A3_1);
     \draw [dotted] (B) .. controls (3.6,5.5) .. (A4);
     
    \path[draw,thin,->] 
    (4.3,2.6) -- ++(0,-1.2cm)  -- (5.4,1.4);
    \node at (5.8,1.4) {$\Gamma_1:$};
    \node at (4.6,1) {\small{$e(Y_K)<0$}};
    
    \path[draw,thin,->] 
    (4.3,6) -- ++(0,1cm)  -- (5.4,7);
    \node at (5.8,7) {$\Gamma_2:$};
    \node at (4.6,7.5) {\small{$e(Y_K)>0$}};
   
          \node (B1) at (6.8, 0) {$\bullet$};
          \node at (6.8,-0.4) {$-2$};
         \node (B2) at (8.1, 0) {$\bullet$};
         \draw [dotted] (B1) -- (B2);
         \node at (8.1,-0.4) {$-2$};
         \draw[decorate,decoration={brace,amplitude=5pt,raise=.5cm},yshift=0pt] (6.7, 0) -- (8.2, 0) node [midway,yshift=.9cm]{$p_{1}\!-\!1$};
    \node (B3) at (9, 0) {$\bullet$};
    \draw [-] (B2) -- (B3);
    \node at (9,-0.4) {$-t\!-\!1$};
     \node (B4) at (10, 0) {$\bullet$};
     \draw [color=red] (10,0) ellipse (.15 and -.33);
     \draw [-] (B3) -- (B4);
     \node at (10.4,-0.4) {$n_{t+1}$};

      \node (C1) at (7, 3) {$\bullet$};
          \node at (6.9,2.6) {$-2$};
         \node (C2) at (8.2, 2) {$\bullet$};
         \draw [dotted] (C1) -- (C2);
         \node at (7.9,1.8) {$-2$};
          \draw[loosely dotted] (6.8, 0.4) .. controls (6.7,1.4) ..  (7, 2.3);
         \draw[decorate,decoration={brace,amplitude=5pt,raise=.2cm,mirror},yshift=0pt] (7, 3) -- (8.2, 2) node [midway,sloped,rotate=60,below,yshift=-.4cm]{$p_{t}\!-\!1$};
         \draw [-] (C2) -- (B3);
        \node (D1) at (9.3, 2.2) {$\bullet$};
        \draw [color=red] (9.3,2.2) ellipse (.15 and -.33);
          \node at (9.65,2.5) {$n_1$};
           \draw  (D1) -- (B3);
         \node (D2) at (10, 1) {$\bullet$};
         \draw [color=red] (10,1) ellipse (.15 and -.33);
           \node at (10.4,1.1) {$n_{t}$};
        \draw[loosely dotted] (9.5, 2.1) .. controls (9.7,2.1) ..  (9.9, 1.3);
         \draw (D2) -- (B3);
        \node (E1) at (6.8, 6) {$\bullet$};
        \draw [color=red] (6.8,6) ellipse (.15 and -.33);
        \node at (6.4, 5.7) {$-p_1$};
        \node (E2) at (7.8, 6) {$\bullet$};
        \node at (7.8, 5.6) {$-t$};
        \draw [-] (E1) -- (E2);
        \node (E3) at (8.8, 6) {$\bullet$};
        \node at (8.8, 5.6) {$-2$};
        \node (X1) at (9.8,6) {$\bullet$};
        \node at (9.8, 5.6) {$-2$};
        \draw [dotted] (E3) -- (X1);
        \draw[decorate,decoration={brace,amplitude=5pt,raise=.55cm},yshift=0pt] (8.8, 6) -- (9.9, 6) node [midway,above,yshift=.7cm]{\tiny{$|n_{t+1}|\!-\!1$}};
        \draw [-] (E2) -- (E3);
        \node (E4) at (6.9, 7.5) {$\bullet$};
        \draw [color=red] (6.9,7.5) ellipse (.15 and -.33);
        \draw [-] (E4) -- (E2);
        \draw[loosely dotted] (6.7, 6.4) .. controls (6.7,6.8) ..  (6.8, 7.2);
        \node at (6.5, 7.6) {$-p_t$};
        \node (F1) at (8.8, 7) {$\bullet$};
        \node at (8.8, 7.6) {$-2$};
        \node (F2) at (10, 7.9) {$\bullet$};
        \node at (10, 8.4) {$-2$};
        \draw [dotted] (F1) -- (F2);
         \draw [-] (F1) -- (E2);
         \draw[decorate,decoration={brace,amplitude=5pt,raise=.2cm},yshift=0pt] (8.8, 7) -- (10, 7.9) node [midway,sloped,rotate=-54,above,yshift=.3cm]{\tiny{$|n_{t}|\!-\!1$}};

         \node (D1) at (7.6, 8) {$\bullet$};
         \node at (7.9, 7.8) {$-2$};
         \node (D2) at (8.8, 9) {$\bullet$};
        \node at (9.1, 8.9) {$-2$};
        \draw [dotted] (D1) -- (D2);
         \draw [-] (D1) -- (E2);
         \draw[decorate,decoration={brace,amplitude=5pt,raise=.2cm,mirror},yshift=0pt] (7.6, 8) -- (8.8, 9) node [midway,sloped,rotate=-62,above,yshift=-.9cm]{\tiny{$|n_{1}|\!-\!1$}};

         \draw[loosely dotted] (7.8, 7) .. controls (8,7.1) ..  (8.3, 6.5);
   \end{tikzpicture}
\caption{Two possible negative definite graphs associated to a Type 3B fibered pretzel knot $K$, depending on the value of $e(Y_K)$. The Wu class in each case is depicted in red.}
\label{f:3Bones}
\end{figure}
%
%
    The graph $\Gamma_2$ does not embed (into a standard negative lattice of the same rank) and therefore the associated pretzel knots are not slice. The lack of embedding comes from the fact that we need a central vertex $v_c$ of weight 
    less or equal than $-t-1$ to link the $t+1$ chains of $-2$s of length greater than one, however, $v_c\cdot v_c=-t$. 

    The embedding of the graph $\Gamma_1$, with the same flow of ideas (and notation) described carefully in the proof of Proposition~\ref{p:2A}, satisfies the following:
    \begin{itemize}
        \item The Wu set embeds as $v_i=e^i_1+\dots+e^i_{|n_i|}$ for $i=1,\dots,t+1$.
        \item The central vertex embeds as $v_c=-e_1-\dots-e_{t+1}$.
        \item Each $-2$-chain is linked to the central vertex through a basis vector of the form $e^i_1$. This linking vector determines that in the embedding of the $-2$-chain only basis vectors of the form $e^i_*$ appear. Moreover, if a $-2$-chain embeds with $e^i_*$ vectors, no other $-2$-chain uses these vectors in its embedding. It follows that for each $j=1,\dots,t$, there is a unique $i\in\{1,\dots,t+1\}$ such that $p_j\leq |n_i|$.
        \item Equation~\eqref{e:sign} in this context reads
        $$
        \sum_{i=1}^{t+1}|n_i|=t+1 + 1 +\sum_{j=1}^t(p_j-1)\Leftrightarrow\sum_{i=1}^{t+1}|n_i|=2+\sum_{j=1}^t p_j.
        $$
    \end{itemize}
From the third bullet point, we see that, for some index $k$, corresponding to the `unmatched' $n$ parameter, we have
$$
\sum_{j=1}^t p_j + |n_k| \leq \sum_{i=1}^{t+1}|n_i|.
$$
Combining this with the last bullet point however gives $|n_k|\leq 2$, contradicting the assumption $n_i\leq -3$ for all $i$.

\end{proof}

\begin{re}
    If one follows carefully the proof of the last proposition, one might be left to wonder what happens if we consider the family of Type 3B fibered pretzel knots with $\pm1$ parameters and include the case of $\mp 2$ being the only even parameter. There are indeed many slice, in fact ribbon, fibered pretzels of this sort, one being for example $K=\pm P(1,-3,3,-2)$. The whole family is the obvious one at this point, with an arbitrary number of paired up parameters beyond the 1 and the $-2$. We are discarding these examples since they all admit a simpler presentation as Type 2B slice fibered pretzel knots. In the example proposed we would get $K=\pm P(-3,3,2)$.
\end{re}

We are left only with the sliceness analysis of Type 3C fibered pretzel knots with unitary parameters. However, this is the empty set: by Theorem~\ref{t:Gabai} we are looking at pretzel knots with an even number of parameters, half of them positive, half of them negative and with auxiliary link $L'=\pm P(2,-2,\dots,2,-2)$. The auxiliary link is determined by the signs of the non-unitary entries. If there were unitary parameters, say positive, we cannot get the correct $L'$ while having the same number of positive and negative parameters in $K$. For reference, this immediate observation is collected in the next statement.
\begin{prop}\label{p:3C}
    There are no Type 3C fibered pretzel knots with unitary parameters.
\end{prop}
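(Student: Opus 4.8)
The plan is to derive a contradiction from the definition of Type 3C combined with the presence of a unitary parameter, using only Gabai's characterization in Theorem~\ref{t:Gabai}. First I would unpack what it means for $K$ to be a Type~3C knot: by Theorem~\ref{t:Gabai}, $K$ is of Type~3, so it has an even number $n$ of parameters exactly one of which is even; moreover we are in subcase 3C, which means the number of positive parameters equals the number of negative parameters, and the auxiliary link is exactly $L'=\pm P(2,-2,\dots,2,-2)$ (this is precisely the ``otherwise'' case, i.e.\ not 3A and not 3B). Write the parameters, after the overall sign normalization, as $d$ unitary ones (all $+1$, say, by a global mirror, using the convention that unitary parameters can be collected at the front) together with non-unitary parameters $p_{d+1},\dots,p_n$.

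The key step is to count entries of the auxiliary link $L'$ against the sign balance of the non-unitary parameters. Recall from the construction recalled in Section~\ref{s:background} that for a Type~3 knot $L'=P(-2a_{d+1},\dots,-2a_{n-1},\tfrac{2m}{|m|})$, where the $a_i=p_i/|p_i|$ record the signs of the \emph{non-unitary} parameters and $m$ is the even parameter. So the pattern of signs of the $\pm2$ entries of $L'$ is exactly the pattern of signs of the non-unitary parameters of $K$. For $L'$ to equal $\pm P(2,-2,\dots,2,-2)$, the $\pm 2$ entries must alternate perfectly, and in particular the number of $+2$ entries equals the number of $-2$ entries; hence the number of positive non-unitary parameters of $K$ equals the number of negative non-unitary parameters of $K$. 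But the 3C hypothesis says the number of \emph{all} positive parameters equals the number of \emph{all} negative parameters, and the unitary parameters are all of one sign (here all positive, $d\ge 1$ of them by assumption). Adding $d\ge1$ to the positive side breaks the balance: we would need $d=0$, contradicting the presence of a unitary parameter.

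I would phrase this as: if $K$ is Type~3C with $d\ge1$ unitary parameters, normalized to be positive, then writing $t$ (resp.\ $r$) for the numbers of positive (resp.\ negative) non-unitary parameters, the 3C sign-balance condition gives $d+t=r$, while the condition $L'=\pm P(2,-2,\dots,2,-2)$ forces $t=r$; together these give $d=0$, a contradiction. One minor subtlety to address cleanly is the role of the even parameter, which contributes the last entry $\tfrac{2m}{|m|}=\pm2$ to $L'$: depending on its sign it shifts the count by one, so the precise balance statement for $L'=\pm P(2,-2,\dots,2,-2)$ is that after accounting for this final entry the non-unitary signs alternate; but in all cases the conclusion ``number of positive non-unitary parameters $=$ number of negative non-unitary parameters'' (up to the trivial off-by-one bookkeeping that is already built into how $t,r$ and the even parameter are defined) survives, and the argument goes through. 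I do not expect any real obstacle here — this is a pure bookkeeping argument on Gabai's classification, structurally identical to (and simpler than) the proofs of Propositions~\ref{p:2B} and the 3B part of the non-unitary discussion — the only care needed is to state the sign-counting conventions consistently with those fixed in Section~\ref{s:background}.
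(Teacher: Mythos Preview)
Your approach mirrors the paper's: both argue by playing the Type~3C sign balance off against the constraint $L'=\pm P(2,-2,\dots,2,-2)$. However, your claim that the $L'$ condition forces $t=r$ is not correct, and the ``trivial off-by-one bookkeeping'' you wave aside is exactly where the argument breaks. In the Type~3 formula $L'=P(-2a_{d+1},\dots,-2a_{n-1},\tfrac{2m}{|m|})$ the odd non-unitary parameters contribute entries with \emph{flipped} sign while the even parameter contributes with its \emph{own} sign; balancing the number of $+2$ and $-2$ entries then yields $|t-r|=2$, not $t=r$. Combined with $d+t=r$ this gives $d=2$, not $d=0$. Concretely, $K=P(1,1,-3,5,-3,-4)$ has $d=2$, equal numbers of positive and negative parameters, and $L'=P(2,-2,2,-2)$, so it is a genuine Type~3C knot with unitary parameters---your argument as written would exclude it.

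What closes the gap is the \emph{fibered} hypothesis, which you never invoke: by Theorem~\ref{t:Gabai} a Type~3C knot fibers only if it has a unique parameter of minimal absolute value. With unitary parameters present that minimum is $1$, so uniqueness forces $d=1$, contradicting $d=2$ (the example above is indeed not fibered). Equivalently, by parity: $L'=\pm P(2,-2,\dots,2,-2)$ has an even number $n-d$ of entries, and since $n$ is even this makes $d$ even, while fiberedness forces $d=1$. The paper's one-sentence argument is terse on this same point, but the proposition is stated for fibered knots and the fix is immediate once that condition is brought in.
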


Theorem~\ref{thm_main} now follows as a combination of Propositions  \ref{p:odd}, \ref{prop_nonunitary}, \ref{p:2A}, \ref{p:2B}, \ref{p:3A}, \ref{p:3B} and~\ref{p:3C}.

\bibliographystyle{alpha}
\bibliography{refs}

\end{document}